\documentclass{article}

\usepackage{amsfonts}
\usepackage{amsmath}

\usepackage{bm}
\usepackage{amsthm}
\usepackage{psfrag}
\usepackage{auto-pst-pdf}
\usepackage{enumerate}
\usepackage{amssymb}
\usepackage{bbm}
\usepackage{color}
\usepackage{algorithm}
\usepackage{algpseudocode}
\usepackage{subfigure}
\usepackage{enumitem}
\usepackage{url} 
\def\ie{{\it i.e.}}
\def\eg{{\it e.g.}}
\algnewcommand\algorithmicinput{\textbf{Input:}}
\algnewcommand\INPUT{\item[\algorithmicinput]}
\algnewcommand\Offline{\item[\textbf{Offline-Phase:}]}
\algnewcommand\Online{\item[\textbf{Online-Phase:}]}
\def\nn{\nonumber}
\def\expec{\mathbbm{E}}
\def\real{\mathbbm{R}}

\DeclareMathOperator{\diag}{diag}

\def\defeq{\triangleq}

\newcommand{\st}{\textup{subject to}}
\DeclareMathOperator{\minimize}{\text{\textnormal{minimize}}}
\DeclareMathOperator{\rank}{rank}
\newtheorem{theorem}{Theorem}[section]
\newtheorem{lemma}[theorem]{Lemma}

\newtheorem{example}[theorem]{Example}

\newcommand{\vv}[1] {\mathbf{#1}}
\def\b{\vv{b}}
\def\u{\vv{u}}

\def\f{\vv{f}}

\def\p{\vv{p}}
\def\d{\vv{d}}
\def\h{\vv{h}}

\def\b{s}
\def\u{u}
\def\C{C_{K}}

\def\f{f}
\def\psiaux{\bm{\kappa}}

\def\gmax{\g^{\max}}
\def\bmax{S^{\max}}
\def\bmin{S^{\min}}
\def\bdot{S^{(\cdot)}}
\def\umax{U^{\max}}
\def\umin{U^{\min}}
\def\udot{U^{(\cdot)}}

\def\upos{\u^{+}}
\def\uneg{\u^{-}}
\def\p{p}

\def\d{\delta}
\def\la{\lambda}
\def\muC{\mu^\mathrm{C}}
\def\muD{\mu^\mathrm{D}}
\def\alI{\alpha^\delta}
\def\alC{\alpha^\mathrm{C}}
\def\alD{\alpha^\mathrm{D}}

\def\alF{\alpha^\mathrm{F}}
\def\alConst{\alpha^\mathrm{Const}}
\newcommand{\pos}[1]{\left(#1\right)^+}
\renewcommand{\neg}[1]{\left(#1\right)^-} 
\def\minimize{\mbox{minimize  }}
\def\st{\mbox{subject to  }}

\def\h{h}
\def\hC{\h^\mathrm{C}}
\def\hD{\h^\mathrm{D}}

\def\g{g}
\def\MWh{\text{MWh}}
\def\MW{\text{MW}}
\def\hr{\text{h}}
\newcommand{\opttag}[1]{\mbox{\bf#1  }}
\def\G{G}
\def\V{V}
\def\E{E}
\def\ER{\E^\mathrm{R}}
\def\fn{\vv{\f}}
\def\un{\vv{\u}}
\def\bn{\vv{\b}}
\def\bsn{\vv{\bs}}
\def\ksn{\vv{\ks}}
\def\ustatn{\vv{\ustat}}
\def\fstatn{\vv{\fstat}}
\def\uhatn{\vv{\uhat}}
\def\fhatn{\vv{\fhat}}
\def\umaxn{\mathbf{U}^{\max}}
\def\uminn{\mathbf{U}^{\min}}

\def\fmax{F^{\max}}
\def\fnmax{\vv{\fmax}}
\def\K{K}
\def\F{\mathcal{F}}
\def\thv{\bm{\theta}}

\def\pmin{\p^{\min}}
\def\pmax{\p^{\max}}
\def\J{J}
\newcommand{\Jk}[1]{\J_\mathrm{P#1}}
\newcommand{\Jkt}[2]{\J_{\mathrm{P#1},#2}}

\newcommand{\Pk}[1]{\text{\bf P#1}}

\def\ustat{\u^\mathrm{stat}}
\def\fstat{\f^\mathrm{stat}}

\def\gstat{\g^\mathrm{stat}}
\def\bs{\tilde \b}
\def\ks{\Gamma}
\def\ksmin{\ks^{\min}}
\def\ksmax{\ks^{\max}}
\def\W{W}
\def\Wmax{\W^{\max}}
\def\uhat{\u^\mathrm{ol}}
\def\fhat{\f^\mathrm{ol}}
\def\Dl{\underline{D}}
\def\Du{\overline{D}}
\def\M{M}
\def\Mone{\M^{\u}}
\def\Mtwo{\M^{\b}}
\def\Mthree{\M_{T}}
\def\Mfour{\mathcal M}
\def\N{\eta}
\def\None{\N^{\u}}
\def\Ntwo{\N^{\b}}
\def\L{L}
\def\Ls{\Delta}
\def\DT{\Delta T}
\def\R{N}
\def\renewrate{\mathcal{R}}
\def\gc{\phi}

\def\dmin{\d^{\min}}
\def\dmax{\d^{\max}}
\def\X{X}
\def\Xumin{\X^{\min,\u}}
\def\Xumax{\X^{\max,\u}}
\def\Xbmin{\X^{\min,\b}}
\def\Xbmax{\X^{\max,\b}}
\def\Xudot{\X^{(\cdot),\u}}

\def\Xbdot{\X^{(\cdot),\b}}


\newcommand{\rev}[1]{{\color{blue} #1}}
\def\Tday{\mathcal{T}^\mathrm{Day}}

\def\IM{A}
\def\be{\beta}
\def\ben{\bm{\beta}}
\def\thet{\theta}
\def\thetn{\bm{\theta}}

\usepackage{fullpage}
\usepackage{cite}

\renewcommand{\rev}[1]{{#1}}
\begin{document}\date{}

\title{Control of Generalized Energy Storage Networks
\thanks{This report, written in January 2014, is a longer version of the conference paper \cite{QCYR:acm}. An extended treatment of the single storage case be found at \cite{OMGarxiv}. An improved algorithm for the networked case and its distributed implementation can be found at \cite{TSGarxiv}. This version contains a somewhat more general treatment for the cases with sub-differentiable objective functions and Markov disturbance. }}
\author{Junjie Qin, Yinlam Chow, Jiyan Yang, Ram Rajagopal
\thanks{J. Qin, Y. Chow and J. Yang are with the Institute for Computational and Mathematical Engineering, Stanford, CA, 94305. Email: \texttt{\{jqin, ychow, jiyan\}@stanford.edu}.
R. Rajagopal is with Department of Civil and Environmental Engineering, Stanford, CA, 94305. Email: \texttt{ramr@stanford.edu}.}
}

\maketitle

\begin{abstract}
The integration of intermittent and volatile renewable energy resources requires increased flexibility in the operation of the electric grid. Storage, broadly speaking, provides the flexibility of shifting energy over time; network, on the other hand,  provides the flexibility of shifting energy over geographical locations. The optimal control of general storage networks in uncertain environments is an important open problem.
The key challenge is that, even in small networks, the corresponding constrained stochastic control problems with continuous spaces suffer from curses of dimensionality, and are intractable in general settings.
For large networks, no efficient algorithm is known to give optimal or near-optimal performance.

This paper provides an efficient and provably near-optimal algorithm to solve this problem in a very general setting.
We study the optimal control of generalized storage networks, \ie,  electric networks connected to distributed generalized storages. Here generalized storage is a unifying dynamic model for many components of the grid that provide the functionality of shifting energy over time,  ranging from standard energy storage devices to deferrable or thermostatically controlled loads. An online algorithm is devised for the corresponding constrained stochastic control problem based on the theory of Lyapunov optimization. We prove that the algorithm is near-optimal, and construct a semidefinite program to minimize the sub-optimality bound. The resulting bound is a constant that depends only on the parameters of the storage network and cost functions, and is independent of uncertainty realizations.  Numerical examples are given to demonstrate the effectiveness of the algorithm.
\end{abstract}

%
\newtheorem{assumption}[theorem]{Assumption}
\newtheorem{definition}[theorem]{Definition}
\newtheorem{remark}[theorem]{Remark}
%

\addtolength{\abovedisplayskip}{.02in}
\addtolength{\belowdisplayskip}{.02in}

\section{Introduction}
To ensure a sustainable energy future, deep penetration of renewable energy generation is essential. Renewable energy resources, such as wind and solar, are intrinsically variable. Uncertainties associated with these intermittent and volatile resources pose a significant challenge to their integration into the existing grid infrastructure \cite{NRELWest2010}. More flexibility, especially in shifting energy supply and/or demand across time and network, are desired to cope with the increased uncertainties.

Energy storage provides the functionality of shifting energy across time. A vast array of technologies, such as  batteries, flywheels, pumped-hydro, and compressed air energy storages, are available for such a purpose \cite{Denholm2010, lindley2010naturenews}.
Furthermore, flexible or controllable demand provides another ubiquitous source of storage. Deferrable loads -- including many thermal loads, loads of internet data-centers and loads corresponding to charging electric vehicles (EVs) over certain time interval \cite{thermalStor1993} -- can be interpreted as \emph{storage of demand} \cite{ObRACC2013}. 
Other controllable loads which can possibly be shifted to an earlier or later time, such as thermostatically controlled loads (TCLs), may be modeled and controlled as a storage with negative lower bound and positive upper bound on the storage level \cite{HaoSanandajiPoollaVincent2013}.
These forms of storage enable inter-temporal shifting of excess energy supply and/or demand, and significantly reduce the reserve requirement and thus system costs. 

On the other hand, shifting energy across a network, \ie, moving excess energy supply to meet unfulfilled demand  among different geographical locations with transmission or distribution lines, can achieve similar effects in reducing the reserve requirement for the system.
Thus in practice, it is natural to consider these two effects together. Yet, it remains mathematically challenging to formulate a sound and tractable problem that accounts for these effects in electric grid operations. Specifically, due to the power flow and network constraints, control variables in connected buses are coupled. Due to the storage constraints, control variables in different time periods are coupled as well.
On top of that, uncertainties associated with stochastic generation and demand dramatically complicate the problem, because of the large number of recourse stages and the need to account for all probable realizations.

Two categories of approaches have been proposed in the literature. The first category is based on exploiting structures of specific problem instances, usually using dynamic programming. These structural results are valuable in providing insights about the system, and often lead to analytical solution of these problem instances.
However, such approaches rely heavily on specific assumptions of the type of storage, the form of the cost function, and the distribution of uncertain parameters. Generalizing results to other specifications and more complex settings is usually difficult, and consequently this approach is mostly used to analyze single storage systems.
For instance, analytical solutions to optimal storage arbitrage with stochastic price have been derived in  \cite{QRsimpleStorPes2012} without storage ramping constraints, and in \cite{MITrampStor} with ramping constraints. Problems of using energy storage to minimize energy imbalance are studied in various contexts; see \cite{SuEGTPS, RLDSACC, 2012arXiv1212.0272Q, 6672872} for reducing reserve energy requirements in power system dispatch, \cite{BitarRACC_colocated, Powell} for operating storage co-located with a wind farm, \cite{IBMload, DataCenter} for operating storage co-located with end-user demands, and \cite{StorDRLongbo} for storage with demand response.

The other category is to use heuristic algorithms, such as Model Predictive Control (MPC)\cite{XieEtAlWindStorMPC} and look-ahead policies \cite{NRELStorValue2013},  to identify sub-optimal storage control rules. Usually based on deterministic (convex) optimization, these approaches can be easily applied to general networks. The major drawback is that these approaches usually do not have any performance guarantee. Consequently,  it lacks theoretical justification for implementing them in real systems.
Examples of this category can be found in \cite{XieEtAlWindStorMPC} and references therein.



This work aims to bring together the best of both worlds, \ie, to design online deterministic optimizations that solve the stochastic control problem with provable guarantees. It contributes to the existing literature in the following ways. First, we formalize the notion of {\it generalized storage} as a dynamic model that captures a variety of power system components which provide the functionality of storage. Second, we formulate the problem of  storage network operation as a stochastic control problem with general cost functions, and provide examples of applications that can be encapsulated by such a formulation. Third, we devise an online algorithm for the problem based on the theory of Lyapunov optimization \rev{\footnote{\rev{Although closely related to the classical Lyapunov theory} \rev{for stability, the theory and techniques of Lyapunov optimization are relatively recent. See \cite{NeelyBook} for more details.}}}, and prove guarantees for its performance in terms of a bound of its sub-optimality. We also show that the bound is independent of the realizations of the uncertain parameters.  The bound is useful not only in assessing the worst-case performance of our algorithm, but also in evaluating the performance of other sub-optimal algorithms when the optimal costs are hard to obtain. It can also be used to estimate the maximum cost reduction that can be achieved by {\it any} storage operation, thus provides understanding for the limit of a certain storage system.
To the best of our knowledge, this is the first algorithm with provable guarantees for the storage operation problem with general electric networks.

Our methodology is closely related to that of \cite{DataCenter}, where the focus is on solving the problem of operating an idealized energy storage (with no energy dissipation over time, and no charging/discharging conversion loss) at data-centers. Our objective is to provide an algorithm to operate generalized storage network in a wide range of different settings. This requires an extended or a new analysis in the following aspects. From the modeling perspective, in order to capture applications such as deferrable loads and TCLs, we do not assume storage level is non-negative, instead, we only assume each storage is feasible (see Assumption~\ref{assume:feas} for more details).  Furthermore, modeling the dissipation of energy over time leads to a new sub-optimality bound; the bound in \cite{DataCenter} becomes a special case of our bound when the dissipation factor (or storage efficiency) is one.  A semidefinite program  is constructed to decide parameters of the algorithm in order to minimize the sub-optimality bound. Finally, the aspect of power network appears to be completely new.  

The rest of the paper is organized as follows. Section 2 formulates the problem of operating a generalized storage network under uncertainty. Section 3 gives the online algorithm and states the performance guarantee. Section 4 analyzes the single bus case in detail with a generalized storage, and Section 5 provides a summary of results for general storage networks. Numerical examples are then given in Section 6. Section 7 concludes the paper.

\section{Problem Formulation}\label{sec:problem}

\subsection{Generalized Storage Models}\label{sec:prob:singleBusModel}
We start by defining a generalized storage model for each fixed bus of the electric network. A diagram is shown in Figure~\ref{fig:diagSingle}. Such a model may be used for a single bus system by setting the network inflow to be zero, or as a component of an electric network as discussed in Section~\ref{sec:modelNetwork}. We work with a slotted time model, where $t$ is used as the index for an arbitrary time period. Given that the actual length of each time interval is constant, this allows for simple conversion from power units (\eg, MW) to energy units (\eg, MWh) and vice versa. Thus we assume all quantities under consideration in this paper are in energy units, albeit many power system quantities are conventionally specified in power units.

\begin{figure}[htbp]
\centerline{
\includegraphics[width = 0.60\textwidth]{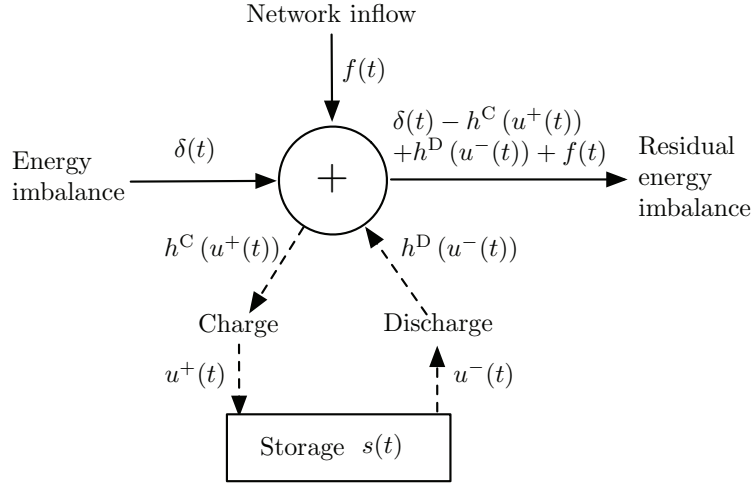}}
\caption{Diagram of a single-bus storage system.}
\label{fig:diagSingle}
\end{figure}

For the bus under consideration and time period $t$, the local \emph{energy imbalance} $\d(t)$ is defined to be the difference between the local generation and demand. Both the local generation and demand can be stochastic, and therefore $\d(t)$ is stochastic in general. The bus may be connected to other parts of the network, whose net energy inflow is denoted by $f(t)$. The bus is also connected to a \emph{generalized storage}, which is specified by the following elements:
\begin{itemize}
\item The \emph{storage level} or State of Charge (SoC) $\b(t)$ summarizes the status  of the storage at time period $t$. If $\b(t) \ge 0$, it represents the amount of energy in storage; if $\b(t)\le 0$, $-b(t)$ can represent the amount of currently deferred (and not fulfilled) demand.  It satisfies $\b (t) \in [\bmin, \bmax]$, where $\bmax$ is the storage capacity, and $\bmin$ is the minimum allowed storage level. 
\item The \emph{storage operation} $\u(t)$ summarizes the charging (when $\u(t)\ge 0$) and discharging (when $u(t)\le 0$) operations of the storage. It satisfies charging and discharging ramping constraints, \ie, $\u(t) \in [\umin, \umax]$, where $\umin \le 0$ whose magnitude is the maximum discharge within each time period, and $\umax \ge 0$ is the maximum charge within each time period. We also use $\upos(t) = \max(u(t),0)$ and $\uneg(t) = \max(-u(t),0)$ to denote the charging and discharging operations respectively.
\item The \emph{storage conversion function} $\h$ maps the storage operation $\u(t)$ into its effect on the bus. In particular, it is composed of two linear functions, namely the \emph{charging conversion function} $\hC$, and the \emph{discharging conversion function} $\hD$, such that the quantity $\hC(\upos(t))$ is the amount of energy that is withdrawn from the bus due to $\upos(t)$ amount of charge, and $\hD(\uneg(t))$ is the amount of energy that is injected into the bus due to $\uneg(t)$ amount of discharge, whence
\[\h(\u(t)) = \hD(\uneg(t))-\hC(\upos(t))\] is the net energy injection into the bus.
\item The \emph{storage dynamics} is then
\begin{equation}\label{eq:storDyn}
\b(t+1) = \la \b(t) + \u (t),
\end{equation}
where $\la\in (0,1]$ is the \emph{storage efficiency} which models the loss over time even if there is no storage operation.
\end{itemize}
The storage parameters satisfy the following consistency conditions.
\begin{assumption}[Feasibility]\label{assume:feas}
Starting from any feasible storage level, there exists a feasible storage operation such that the storage level in the next time period is feasible, that is
\begin{enumerate}
\item $\la \bmin + \umax \ge \bmin$,
\item $\la \bmax + \umin \le \bmax$.
\end{enumerate}
\end{assumption}
%
The \emph{residual energy imbalance}, after accounting for the network inflow and storage operation, is then given by:
\begin{equation}\label{eq:reEI}
\d(t) + \h(\u(t)) + f(t) = \d(t) - \hC(\upos(t)) + \hD(\uneg(t)) + f(t).
\end{equation}

We give a few examples of generalized storage models as follows.
\begin{example}[Storage of energy]\label{eg:soe}
Storage of energy can be modeled as a generalized storage with  $\bmax \ge \bmin \ge 0$. Here $\umin$ and $\umax$ correspond to the power rating of the storage, up to a multiple of the length of each time period.  By setting $\hC(\upos(t)) = (1/\muC)\upos(t)$, and $\hD(\uneg(t)) = \muD \uneg(t)$, one models the energy loss during charging and discharging operations. Here $\muC\in (0,1]$ is the charging efficiency; $\muD\in (0,1]$ is the discharging efficiency; and the round-trip efficiency of the energy storage is $\muC \muD$. For instance, based on the information from \cite{MS2006}, a NaS (sodium sulfur) battery can be modeled with parameters:
\[
\begin{split}
&(\bmin, \bmax, \umin, \umax, \muC, \muD, \la)=(0 \MWh, 100 \MWh, \\
&\quad\quad\quad\quad -10 \MW \times 1 \hr,10 \MW \times 1 \hr,  0.85, 0.85, 0.97),
\end{split}
\]
 and a CAES (compressed air energy storage) can be modeled with parameters:
 \[
 \begin{split}
& (\bmin, \bmax, \umin, \umax, \muC, \muD, \la)=(0 \MWh, 3000 \MWh,\\
 &\quad\quad\quad\quad-300 \MW \times 1 \hr, {300 \MW \times 1 \hr},0.85, 0.85, 1.00).
 \end{split}
 \]
\end{example}
\begin{example}[Storage of demand]\label{eg:sod}
Pre-emptive deferrable loads may be modeled as storage of demand, with $-\b(t)$ corresponding to the accumulated deferred (but not yet fulfilled) load up to time $t$ , and with $\u(t)$ corresponding to the amount of load to defer/fulfill in time period $t$. We have $\bmin \le \bmax \le 0$ in this case. Storage of demand differs from storage of energy in the sense that it has to be discharged before charging is allowed. 
The conversion function can usually be set to $\h(\u(t)) = \u(t)$,
and generally $\la = 1$ in deferrable load related applications.
\end{example}
\begin{example}[Generalized battery models]\label{eg:gbm}
It is shown recently that an aggregation of TCLs may be modeled as a generalized battery \cite{HaoSanandajiPoollaVincent2013}. A discrete time version of such a model can be cast into our framework by setting $\bmin = - \bmax$ and $\bmax \ge 0$. Other storage parameters can be set properly according to Definition 1 of \cite{HaoSanandajiPoollaVincent2013}, and we have $\la \le 1$ to model energy dissipation.
\end{example}

We consider the following \emph{stochastic piecewise linear cost function} for each fixed bus
\begin{align}\label{eq:singleBusGeneralCost}
\g(t) &= \sum_{\ell = 1}^L \p(t, \ell) \Big(\alI(\ell) \d(t) - \alC(\ell) \hC(\upos(t)) \\
&\quad + \alD(\ell) \hD(\uneg(t)) + \alF(\ell) f(t) + \alConst(t,\ell)\Big)^+, \nn
\end{align}
where the parameter $\p(t, \ell)$ is in general stochastic, and follows a  prescribed probability law,  and $\alI(\ell)$, $\alC(\ell)$, $\alD(\ell)$, $\alF(\ell)$ and $\alConst(t, \ell)$ are constants, for each $\ell = 1, \dots, L$ and $t$. This cost function serves as a generalization of positive (and/or negative) part cost function of the residual energy imbalance, and it encapsulates many applications of storage as shown in Section~\ref{sec:prob:applications}. Our analysis applies to a more general class of cost functions; see Appendix~\ref{appendix:convexCost} for more details.

\subsection{Applications in Single Bus Systems}\label{sec:prob:applications}
The storage operation problem on a single bus system ($f(t) = 0$) can be posed as an infinite horizon average cost stochastic control problem as follows:
\begin{subequations}\label{prob:singleBusGeneral}
\begin{align}
\minimize \quad &  \lim_{T \to \infty} \frac{1}{T} \expec \left[\sum_{t=1}^T \g(t) \right]\\
\st \quad & \b(t+1) = \la \b(t) + \u(t), \\
        & \bmin \le \b(t) \le \bmax, \\
        & \umin \le \u(t) \le \umax,
\end{align}
\end{subequations}
where we aim to find a control policy that maps the state $\b(t)$ to storage operation $\u(t)$, minimizes the expected average cost and satisfies all constraints for each time period $t$. Here, the initial state $\b(1) \in [\bmin, \bmax]$ is given.

Combining some specific cases of the generalized storage model given in Examples~\ref{eg:soe}-\ref{eg:gbm} with properly defined cost functions leads to possible problem instances of optimal control of storage under uncertainty. Here we provide examples that are considered in the literature.
\begin{example}[Balancing]
Storage may be used to minimize residual energy imbalance  given some stochastic $\{\d(t): t\ge 1\}$ process. Typical cost functions penalize the positive and negative residual energy imbalance differently, and may have different penalties at different time periods . (For example, to model the different consequences of load shedding at different times of the day.) The problem of optimal storage control for such a purpose can be modeled by problem~\eqref{prob:singleBusGeneral} with the cost function
\begin{align*}
\g(t) = &q^+(t) \pos{\d(t) - \hC(\upos(t)) +\hD(\uneg(t))} \\
&+ q^-(t) \neg{\d(t) - \hC(\upos(t)) +\hD(\uneg(t))}\\
= &q^+(t) \pos{\d(t) - \hC(\upos(t)) +\hD(\uneg(t))} \\
&+ q^-(t) \pos{-\d(t) + \hC(\upos(t)) -\hD(\uneg(t))},
\end{align*}
where $q^+(t)$ and $q^-(t)$ are the penalties\footnote{These penalties are usually prescribed deterministic sequences \cite{SuEGTPS}. } for each unit of positive and negative residual energy imbalance at time period $t$, respectively. 
\end{example}
\def\pl{\p^\mathrm{LMP}}
\begin{example}[Arbitrage]
Given that the locational marginal prices $\{\pl(t): t\ge 1\}$ are stochastic, a storage may be used to exploit arbitrage opportunities in electricity markets. The problem of maximizing the expected arbitrage profit using storage operations can be cast as an instance of \eqref{prob:singleBusGeneral}, with the cost function (\ie, negative profit) given by:
\begin{align*}
\g(t) = &\pl(t) (\hC(\upos(t)) - \hD(\uneg(t)))\\
= &\pl(t) \pos{\hC(\upos(t)) - \hD(\uneg(t))}\\
&-\pl(t) \pos{-\hC(\upos(t)) + \hD(\uneg(t))}.
\end{align*}
\end{example}
\begin{example}[Storage co-located with a stochastic generation or demand]
Applications of this type may be cast into our framework using $\{\d(t): t\ge 1\}$ to model the stochastic generation or demand process, and $\{\p(t,\ell): t\ge 1\}$  to model the stochastic prices. A possible cost function is 
\[
\g(t) = \pl(t) \pos{-\d(t) + \hC(\upos(t)) -\hD(\uneg(t))},
\]
where the residual energy is curtailed with no cost/benefit, and the residual demand is supplied via buying energy from the market at stochastic price $\pl(t)$. 
\end{example}

\subsection{Network Models}\label{sec:modelNetwork}
The electric network can be modeled as a directed graph $\G(\V, \E)$. Let $n = |\V|$, $m = |\E|$, and $\ER$ be the edge set with all edges reversed.
We use the notation $e\sim v$ to indicate that $e \in \{ (v',v) \in \E \cup \ER: v' \in \V\}$.
Each edge models a transmission (or distribution) line, and is associated with some power flow. Assuming the power system is operated in steady state, and the power flow is approximately a constant over each time period $t$, the energy flow through the line can be obtained by multiplying the power flow by the length of each time period and is denoted by $f_e(t)$ for $e\in \E$, with the direction of the edge indicating the positive direction of the flow.\footnote{To lighten the notation, for each $e = (v_1, v_2) \in \E$, we also define $\f_{e'}(t) = -\f_{e}(t)$ for $e' = (v_2, v_1)$, and therefore for all $v\in \V$, the net inflow $\sum_{e\sim v} f_e (t)  = \sum_{e= (v',v)\in \E} f_e (t)  + \sum_{ e = (v',v)  \in \ER} f_e (t) = \sum_{e= (v',v)\in \E} f_e (t)  - \sum_{ e = (v,v')  \in \E} f_e (t)$.} The flow vector $\fn(t) \in \real^m$ satisfies power flow constraints, which can be compactly summarized by the following set of linear constraints  using the classical DC power flow approximations to AC power flow equations \cite{Stott09}:
\begin{equation}\label{eq:Fcal}
\fn(t) \in \F,\quad \F = \{\fn\in \real^m: -\fnmax \le \fn \le \fnmax, \K\fn = 0 \},
\end{equation}
where $\fnmax\in \real^m$ is the vector of the line capacities of the network, and $\K\in \real^{(m-n+1)\times m}$ is a matrix summarizing the Kirchhoff's voltage law. The construction of this $\K$ matrix from network topology and line parameters can be found in Appendix~\ref{sec:DCflow}
Note that additional network constraints may be included in the definition of the set $\F$. 

Each node models a bus in the electric network. On bus $v\in \V$, a set of variables as described in Section~\ref{sec:prob:singleBusModel} is defined, with a subscript $v$ attached to each of the bus variables, and the network inflow is replaced by network flows to the bus from incident lines. The cost for bus $v$ and time period $t$ is then given by
\begin{align}\label{eq:networkGeneralCost}
\g_v(t) &= \sum_{\ell = 1}^{L_v} \p_v(t, \ell) \Big(\alI_v(\ell) \d_v(t) - \alC_v(\ell) \hC_v(\upos_v(t)) \\
&\quad\! +\! \alD_v(\ell) \hD_v(\uneg(t))\! +\! \alF_v(\ell) \sum_{e\sim v} f_e (t)\! +\! \alConst_v(t,\ell)\Big)^+, \nn
\end{align}
and the networked storage stochastic control problem is defined as follows:
\begin{subequations}\label{prob:networkGeneral}
\begin{align}
\minimize \quad &  \lim_{T \to \infty} \frac{1}{T} \expec \left[\sum_{t=1}^T\sum_{v\in \V} \g_v(t) \right]\label{prob:networkGeneral:obj}\\
\st \quad & \b_v(t+1) = \la_v \b_v(t) + \u_v(t), \label{prob:networkGeneral:storUpdate}\\
        & \bmin_v \le \b_v(t) \le \bmax_v, \\
        & \umin_v \le \u_v(t) \le \umax_v, \label{prob:networkGeneral:u}\\
        & \fn(t) \in \F.\label{prob:networkGeneral:F}
\end{align}
\end{subequations}
In this problem, we aim to find a control policy that maps the state $\bn(t)$ to storage operation $\un(t)$ and network flow $\fn(t)$, and minimizes the expected average cost objective function\eqref{prob:networkGeneral:obj}, such that constraints \eqref{prob:networkGeneral:storUpdate}-\eqref{prob:networkGeneral:u} hold for each $t$ and $v$, and \eqref{prob:networkGeneral:F} holds for each $t$.

\section{Online Algorithm and Performance Guarantees}
This paper provides an online algorithm for solving \eqref{prob:networkGeneral} with provable performance guarantees. Here we give a preview of the algorithm (Algorithm~\ref{alg}) and its sub-optimality bound (Theorem~\ref{thm:main}).  
\begin{algorithm}[htbp]
\caption{Online Lyapunov Optimization for Storage Network Control}
\begin{algorithmic}
\INPUT (i) Storage specifications $(\bmin_v$, $\bmax_v$, $\umin_v$, $\umax_v$, $\hC_v$, $\hD_v$, $\la_v)$, (ii) cost parameters in $\g_v(t)$ (including an upper bound and a lower bound on the sub-derivative of $\g_v(t)$ with respect to $\u_v(t)$, denoted by $\Du \g_v(t)$ and $\Dl \g_v(t)$, and  excluding any information about stochastic parameters $\d_v(t)$ and $\p_v(t, \ell)$, $\ell = 1, \dots, L_v$) for each bus $v\in \V$, and  (iii) network parameters $\K$ and $\fnmax$.
\Offline Determine algorithmic parameters $\ks_v$ and $\W_v$ for each bus $v\in \V$, by solving semidefinite program \eqref{prob:sdp_network}.
\Online 
\For {each time period $t$}
    \State Each bus $v\in \V$ observes realizations of stochastic parameters $\d_v(t)$ and $\p_v(t,\ell)$, $\ell = 1, \dots, L_v$.
    \State Solve the following deterministic optimization for storage operation $\un(t)$ and network flow $\fn(t)$:
\begin{subequations}\label{prob:P3_network}
\begin{align}
 \minimize  &  \sum_{v\in \V} \frac{\la_v (\b_v(t)+\ks_v) \u_v(t)}{\W_v} + \g_v(t)\\
\st  
        & \uminn \le \un(t) \le \umaxn, \\
        & \fn(t) \in \F.
\end{align}
\end{subequations}
\EndFor
\end{algorithmic}\label{alg}
\end{algorithm}
The performance theorem will hold under the following additional technical assumptions.
\begin{assumption}\label{assume:W_max}
For each bus $v\in \V$, the range of storage control is smaller than the effective capacity of the storage, \ie, $\umax_v - \umin_v < \bmax_v - \bmin_v$.
\end{assumption}
Since the bounds for storage control $\umax_v$ and $\umin_v$ are the product of the power rating of storage (in unit MW for example) and the length of each time period, this assumption holds for most systems as long as the length of each time period is made small enough. For instance, this assumption is satisfied for both energy storage examples in Example~\ref{eg:soe}.

We make the following assumption on the stochastic parameters of the system.
\begin{assumption}\label{assume:stoc}
Let the collection of stochastic parameters be $\thv(t) =\{\d_v(t),  \p_v(t, \ell), \ell = 1, \dots, L_v, v\in \V\} $. Then one of the following two assumptions is in force:
\begin{enumerate}
\item $\thv(t)$ is independent and identically distributed (i.i.d.) across time $t$, and is supported on a compact set.
\item $\thv(t)$ is some deterministic function of the system stochastic state $\omega(t)$, which is supported on an (arbitrarily large) finite set $\Omega$, and follows an ergodic Markov Chain.
\end{enumerate}
\end{assumption}
The first assumption is used to give a simple proof of the performance theorem and provide insights on the construction of our algorithm. 
The second alternative assumption intends to generalize the performance bounds to non-i.i.d. cases. 
The additional assumptions such as that $\omega(t)$ lies in a finite set are introduced to reduce the required technicality in view of the page limit. The same result can be obtained in a more general setting where $\{\thv(t): t\ge 1\}$ follows a renewal process. The performance bounds in these setting are the same, up to a multiple of the mean recurrence time of the stochastic process under consideration. 

Under Assumptions~\ref{assume:W_max} and~\ref{assume:stoc}, the following performance guarantee holds.
\begin{theorem}\label{thm:main}
The control actions $(\un(t), \fn(t))$ generated by Algorithm~\ref{alg} \rev{are} feasible for \eqref{prob:networkGeneral} and sub-optimal, whose sub-optimality\footnote{Here the sub-optimality is defined as the difference between the objective value of \eqref{prob:networkGeneral} with $(\un(t), \fn(t))$ generated by Algorithm~\ref{alg} and the optimal cost of \eqref{prob:networkGeneral}.} is bounded by a constant that depends only on the parameters of the storages and cost functions, and is independent of realizations of the stochastic parameters. 
\end{theorem}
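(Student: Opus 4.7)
The plan is to use the standard drift-plus-penalty framework of Lyapunov optimization with the quadratic Lyapunov function $\mathcal{L}(t) = \frac{1}{2}\sum_{v\in\V} (\b_v(t)+\ks_v)^2/\W_v$, where the shift $\ks_v$ and the scaling $\W_v$ are precisely the parameters chosen in the offline SDP. The proof naturally splits into two parts: (a) showing that the actions $(\un(t),\fn(t))$ produced by Algorithm~\ref{alg} are feasible for \eqref{prob:networkGeneral}, even though the storage box constraint $\bmin_v \le \b_v(t+1) \le \bmax_v$ does not explicitly appear in \eqref{prob:P3_network}; and (b) bounding the resulting average cost by $\OPT + B$ for a constant $B$ depending only on the problem data.

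For feasibility, I would observe that the linear term $\la_v(\b_v(t)+\ks_v)\u_v(t)/\W_v$ appearing in the online objective acts as a state-dependent ``price'' on charging. Using the supplied sub-derivative bounds $\Du\g_v(t)$ and $\Dl\g_v(t)$, one can argue: whenever $\b_v(t)$ is close enough to $\bmax_v$ that an unrestricted $\u_v(t)$ would violate $\b_v(t+1)\le\bmax_v$, the coefficient $\la_v(\b_v(t)+\ks_v)/\W_v$ is so large and positive that it dominates any decrease in $\g_v$ from charging, and the minimizer of \eqref{prob:P3_network} must satisfy $\u_v(t) \le \bmax_v - \la_v \b_v(t)$; a symmetric argument handles the lower bound. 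Encoding these domination conditions produces linear/bilinear inequalities in $(\ks_v,\W_v)$, which in turn are the feasibility set of the offline SDP; Assumption~\ref{assume:W_max} ensures this set is nonempty (the ``safe zone'' $[\bmin_v-\umin_v,\bmax_v-\umax_v]$ is nontrivial). An induction on $t$ starting from $\b_v(1)\in[\bmin_v,\bmax_v]$ then preserves the storage bounds for all $t$, while $\un(t)$ and $\fn(t)$ are forced by the online problem to lie in their respective feasible sets.

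For sub-optimality, I would expand $\mathcal{L}(t+1)-\mathcal{L}(t)$ using \eqref{prob:networkGeneral:storUpdate}; the cross term yields exactly $\sum_v \la_v(\b_v(t)+\ks_v)\u_v(t)/\W_v$, while the residual terms (involving $(1-\la_v^2)(\b_v(t)+\ks_v)^2$, $(1-\la_v)(\b_v(t)+\ks_v)\ks_v$, and $\u_v(t)^2$) can all be bounded by a constant $B$ using $\la_v\in(0,1]$, $\b_v(t)\in[\bmin_v,\bmax_v]$, and $\u_v(t)\in[\umin_v,\umax_v]$ --- this bounded remainder is the source of the sub-optimality constant. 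Taking conditional expectations and adding $\expec[\sum_v \g_v(t)\mid\mathcal{H}_t]$ to both sides, the drift-plus-penalty upper bound is exactly (a constant plus) the expected value of the online objective \eqref{prob:P3_network}. Since Algorithm~\ref{alg} minimizes the realized online objective over all feasible actions, the bound still holds if we replace the algorithm's action with that of any other admissible policy; under Assumption~\ref{assume:stoc}(1), choosing that policy to be an optimal stationary (or $\omega$-only) policy gives $\Delta(t)+\expec[\sum_v \g_v(t)\mid\mathcal{H}_t]\le \OPT + B$. Telescoping over $t=1,\dots,T$, dividing by $T$, and using boundedness of $\mathcal{L}$ (a consequence of the feasibility step) to kill the $\mathcal{L}(T+1)/T$ term yields $\limsup_{T\to\infty}\frac{1}{T}\expec[\sum_{t=1}^T\sum_v \g_v(t)] \le \OPT + B$. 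The offline SDP then minimizes $B$ over the feasibility set of $(\ksn,\Wn)$ identified in the previous step, giving the tightest bound available to the method.

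The main obstacle is the feasibility argument: the algorithm does not enforce storage box constraints, so one must translate the desired invariance $\b_v(t)\in[\bmin_v,\bmax_v]$ into inequalities linking $\ks_v$, $\W_v$, $\Du\g_v$, $\Dl\g_v$, and the storage parameters, and then verify that these are exactly what the offline SDP enforces (this is also where the coupling through $\F$ has to be handled carefully: the network constraints restrict $\fn$ but not $\un$ directly, so the per-bus domination argument for $\u_v$ still goes through). A secondary difficulty is Assumption~\ref{assume:stoc}(2): the one-step drift inequality no longer directly compares to $\OPT$ because the competing i.i.d.\ stationary policy need not exist, and one must instead analyze the drift over a recurrence period of the Markov chain $\omega(t)$ and invoke an ergodic MDP-optimal stationary policy, which multiplies $B$ by the mean recurrence time as alluded to in the discussion preceding the theorem.
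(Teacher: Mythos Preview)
Your proposal is correct and follows essentially the same approach as the paper: quadratic Lyapunov function on the shifted storage levels, feasibility via a sub-derivative domination argument (the paper's Lemma~\ref{coro:tech_res} and Theorem~\ref{thm:feas_single}) that forces $\uhat_v(t)=\umin_v$ (resp.\ $\umax_v$) once $\b_v(t)+\ks_v$ crosses a threshold, and performance via drift-plus-penalty compared against an optimal stationary disturbance-only policy for the relaxed problem \Pk{2} (Lemma~\ref{lem:stat}/\ref{lem:stat_network}), followed by telescoping. Your remark that the network coupling through $\F$ does not obstruct the per-bus feasibility argument, and your sketch of the Markov extension via recurrence epochs, also match the paper's treatment (Appendices~\ref{sec:proofNetwork} and~\ref{appendix_markov_process}). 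The only cosmetic difference is that you absorb $\W_v$ into the Lyapunov function while the paper keeps it as a multiplier on the cost; the two are equivalent and yield the same online objective \eqref{prob:P3_network} and the same bound $\sum_v \M_v(\ks_v)/\W_v$.
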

The precise expressions of the sub-optimality bounds for the single bus case and general network case are given in Section~\ref{sec:proofSingleBus} and Appendix~\ref{sec:proofNetwork}, respectively, both under the i.i.d. assumption (Assumption~\ref{assume:stoc}.1). 
The bounds for  settings  with the Markov assumption (Assumption~\ref{assume:stoc}.2) are given in Appendix~\ref{appendix_markov_process}.
\begin{remark}[Convexity]
Our result holds without assuming that $\g_v(t)$ is convex in $\u_v(t)$, $v\in \V$. However, we do assume the online optimization \eqref{prob:P3_network} can be solved efficiently, and in all numerical examples we work with convex cost functions. 
\end{remark}


\section{Analysis for Single Bus Systems}\label{sec:proofSingleBus}
To demonstrate the proof ideas without unfolding all technicalities, we prove Theorem~\ref{thm:main} for a single bus system under the following simplifying assumptions.
\begin{assumption}\label{assume:singleBusProof}
We assume in this section:
\begin{itemize}
\item the imbalance process $\{\d(t): t\ge 1\}$ is independent and identically distributed (i.i.d.) across $t$ and is supported on a compact interval $[\dmin, \dmax]$;
\item for each $\ell = 1, \dots, L$, the process $\{\p(t,\ell): t \ge 1\}$ is i.i.d. across $t$ and is supported on a compact interval $[\pmin(\ell), \pmax(\ell)]$.
\end{itemize}
\end{assumption}
Define
\[
\bar \u \defeq \lim_{T \to \infty} \frac{1}{T} \expec\left[ \sum_{t=1}^T \u(t)\right],\,\,\bar \b \defeq \lim_{T \to \infty} \frac{1}{T} \expec\left[ \sum_{t=1}^T \b(t)\right].
\]
Note that for $\b(1)\in[\bmin,\bmax]$,
\[
\bar \u=\lim_{T \to \infty}\frac{1}{T}\expec\left[\sum_{t=1}^T\b(t+1)-\lambda\b(t)\right]=(1-\lambda)\bar\b.
\]
As $\b(t)\in[\bmin,\bmax]$ for all $t\geq 0$, the above expression implies
\[
(1-\lambda)\bmin\leq \bar \u\leq (1-\lambda)\bmax.
\]
Problem~\eqref{prob:singleBusGeneral} can be equivalently written as follows
\begin{subequations}\label{prob:P1}
\begin{align}
\opttag{P1:} \minimize  &  \lim_{T \to \infty} \frac{1}{T} \expec \left[\sum_{t=1}^T \g(t) \right]\\
\st  & \b(t+1) = \la \b(t) + \u(t), \label{P1:dynamics}\\
        & \bmin - \la \b(t) \le \u(t) \le \bmax - \la \b(t), \label{P1:bbounds-u}\\
        & \umin \le \u(t) \le \umax, \label{P1:ubounds}\\
        &(1-\lambda)\bmin\leq \bar \u\leq (1-\lambda)\bmax \label{P1:ubar},
\end{align}
\end{subequations}
where bounds on $\b(t)$ are replaced by \eqref{P1:bbounds-u}, and \eqref{P1:ubar} is added without loss of optimality.

The proof procedure is depicted in the diagram shown in Figure~\ref{fig:proofdiag}, where we use  $\J_\mathrm{P1}(\u)$ to denote the objective value of {\bf P1} with storage operation sequence $\u$ (as an abbreviation of $\{\u(t): t\ge 1\}$), $\u^\star(\mathrm{\bf P1})$ to denote an optimal sequence of storage operation for {\bf P1}, $\J^\star_\mathrm{P1} \defeq  \J_\mathrm{P1}(\u^\star(\mathrm{\bf P1}))$, and we define similar quantities for {\bf P2} and {\bf P3}. Here {\bf P2} is an auxilliary problem we construct to bridge the infinite horizon storage control problem {\bf P1} to online Lyapunov optimization problems {\bf P3} \eqref{prob:P3_network} (or \eqref{prob:P3} for single storage case). It has the following form
\begin{subequations}\label{prob:P2}
\begin{align}
\opttag{P2:} \minimize  &  \lim_{T \to \infty} \frac{1}{T} \expec \left[\sum_{t=1}^T \g(t) \right]\\
\st  
        & \umin \le \u(t) \le \umax, \label{P2:ubounds}\\
        &(1-\lambda)\bmin\leq \bar \u\leq (1-\lambda)\bmax \label{P2:ubar}.
\end{align}
\end{subequations}
Notice that it has the same objective as \Pk{1}, and evidently it is a relaxation of \Pk{1}. This implies that $\u^\star(\Pk{2})$ may not be feasible for \Pk{1}, and
\begin{equation}
\Jk{2}^\star = \Jk{1}(\u^\star(\Pk{2})) \le \Jk{1}^\star.
\end{equation}
The reason for the removal of state-dependent constraints \eqref{P1:bbounds-u} (and hence \eqref{P1:dynamics} as the sequence $\{\b(t): t\ge 1\}$ becomes irrelevant to the optimization of $\{\u(t): t\ge 1\}$) in \Pk{2} is that the state-independent problem \Pk{2} has easy-to-characterize optimal stationary control policies. In particular, from the theory of stochastic network optimization \cite{NeelyBook}, the following result holds.
\begin{lemma}
[Optimality of Stationary Disturbance-Only Policies]\label{lem:stat}
Under Assumption~\ref{assume:singleBusProof}
there exists a stationary disturbance-only\footnote{The policy is a pure function (possibly randomized) of the current disturbances $\d(t)$ and $\p(t,\ell)$, $\ell =1, \dots, L$.} policy $\ustat(t)$
, satisfying \eqref{P2:ubounds} and \eqref{P2:ubar},
and providing the following guarantees for all $t$:
\begin{align}
  & (1-\la)\bmin\leq \expec[\ustat(t) ] \leq (1-\la)\bmax,  \\
  & \expec[\g(t) | \u(t) = \ustat(t)]  = \Jk{2}^\star, \label{eq:statCostLink}
\end{align}
where the expectation is taken over the randomization of $\d(t)$, $\p(t,\ell)$, $\ell =1, \dots, L$, and (possibly) $\ustat(t)$.
\end{lemma}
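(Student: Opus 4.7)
The plan is to reduce the infinite-horizon average-cost problem \Pk{2} to a single-stage stochastic program parameterized by the i.i.d.\ disturbance $\thv(t) = (\d(t), \p(t,1), \dots, \p(t,L))$, which is possible because \Pk{2} contains no state dynamics and only a time-average coupling constraint \eqref{P2:ubar}. First I would define the class $\Pi$ of stationary disturbance-only (possibly randomized) policies as measurable kernels $\pi(d\u \mid \thv)$ supported on $[\umin, \umax]$ satisfying $\expec_{\thv,\pi}[\u] \in [(1-\la)\bmin,(1-\la)\bmax]$, and set $J(\pi) \defeq \expec_{\thv,\pi}[\g]$. Any $\pi \in \Pi$ applied at every $t$ yields a process that is i.i.d.\ across $t$, so \eqref{P2:ubounds}--\eqref{P2:ubar} hold pathwise in expectation and the average cost equals $J(\pi)$.

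Next I would establish existence of a minimizer of $J$ over $\Pi$. Because $\thv$ is supported on a compact set and $\u \in [\umin,\umax]$, the set of joint laws $\mu(d\thv, d\u)$ with $\thv$-marginal equal to the fixed disturbance law is weakly compact by Prokhorov. The piecewise-linear cost $\g$ is continuous in $(\thv,\u)$ on this compact domain, and the constraint $\int \u \, d\mu \in [(1-\la)\bmin,(1-\la)\bmax]$ is closed under weak convergence, so an optimizer $\mu^\star$ exists. Disintegrating $\mu^\star$ with respect to its $\thv$-marginal yields $\ustat$ and immediately gives the claimed bound on $\expec[\ustat(t)]$.

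The main obstacle, and the remaining step, is to show $\inf_{\pi\in\Pi} J(\pi) \le \Jk{2}^\star$, \ie, that no history-dependent non-stationary policy can outperform this class. Given a feasible policy $\u$ for \Pk{2} with limiting average cost $\Jk{2}^\star + \varepsilon$, I would form the time-averaged joint measures $\mu_T \defeq \frac{1}{T} \sum_{t=1}^T \mathrm{Law}(\thv(t), \u(t))$, extract a weakly convergent subsequence by tightness, and identify the limit $\bar\mu$. The i.i.d.\ property of $\thv(t)$ forces the $\thv$-marginal of $\bar\mu$ to be the disturbance law, and the constraint \eqref{P2:ubar} together with weak convergence forces $\bar\mu \in \Pi$ after disintegration. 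By lower semicontinuity of $\g$-integration, $J(\bar\mu) \le \liminf_T \frac{1}{T}\sum_{t=1}^T \expec[\g(t)] = \Jk{2}^\star + \varepsilon$. Letting $\varepsilon \downarrow 0$ and using attainment from the previous step delivers \eqref{eq:statCostLink}. This is the standard argument underlying Theorem~4.5 of \cite{NeelyBook}, and the technical heart is the tightness/weak-limit extraction, since the raw time averages need not converge without passing to subsequences and invoking Fatou's lemma.
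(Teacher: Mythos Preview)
The paper does not actually prove this lemma; it merely invokes ``the theory of stochastic network optimization \cite{NeelyBook}'' and states the result. Your proposal supplies a correct sketch of precisely the standard argument behind that citation (and you correctly identify it as Theorem~4.5 of \cite{NeelyBook}): reduce \Pk{2} to a one-stage stochastic program via the absence of state dynamics, obtain existence of an optimal stationary randomized policy by weak compactness on the compact disturbance--action space, and close the gap against history-dependent policies by time-averaging the joint laws and passing to a weak limit. So your approach is not different from the paper's---it \emph{is} the content of the reference the paper points to, and nothing in the sketch is wrong.
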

Equation~\eqref{eq:statCostLink} not only assures the storage operation induced by the stationary disturbance-only policy achieves the optimal cost, but also
guarantees that the expected stage-wise cost is a constant across time $t$ and equal to the optimal time average cost. This fact will later be exploited in order to establish the performance guarantee of our online algorithm. By the merits of this Lemma, in the sequel, we overload $\u^\star(\Pk{2})$ to denote the storage operation sequence obtained from an optimal stationary disturbance-only policy.
\begin{figure}
\centerline{
\includegraphics[scale = .85]{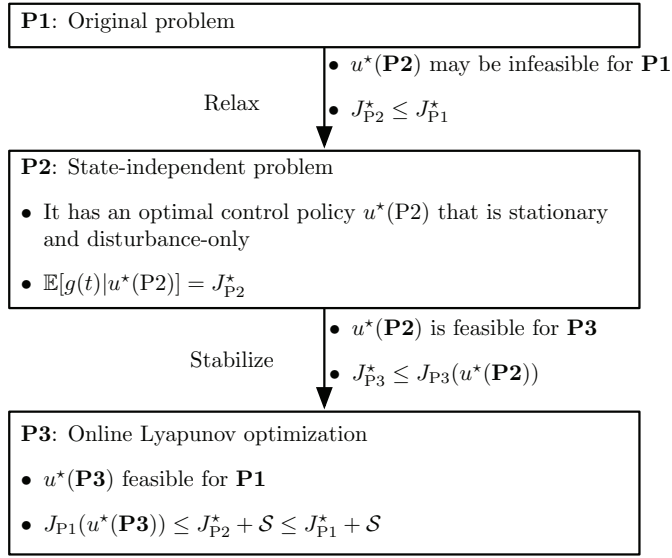}}
\caption{An illustration of the proof procedure as relations between problems considered in Section~\ref{sec:proofSingleBus}. Here $\mathcal{S}$ denotes the sub-optimality bound.}
\label{fig:proofdiag}
\end{figure}

An issue with $\u^\star(\Pk{2})$ for the original problem is that it may not be feasible for \Pk{1}. To have the $\{\b(t): t\ge 1\}$ sequence induced by the storage operation sequence lie in the interval $[\bmin, \bmax]$, we construct a virtual queue related to $\b(t)$ and use techniques from Lyapunov optimization to ``stabilize'' such a queue. Let the queueing state be a shifted version of the storage level:
\begin{equation}\label{eq:shift_queue}
\bs(t)=\b(t)+\ks,
\end{equation}
where the shift constant $\ks$ will be specified later. We wish to minimize the stage-wise cost $\g(t)$ and at the same time to maintain the queueing state close to zero.
This motivates us to consider solving the following optimization online (\ie, at the beginning of each time period $t$ after the realizations of stochastic parameters $\p(t,\ell)$, $\ell = 1, \dots, L$, and $\d(t)$ have been observed)
\begin{subequations}\label{prob:P3}
\begin{align}
\opttag{P3:} \minimize  &  \la \bs(t) \u(t) + \W \g(t)\label{eq:p3_obj}\\
\st  
        & \umin \le \u(t) \le \umax, \label{P3:ubounds}
\end{align}
\end{subequations}
where the optimization variable is $\u(t)$, the stochastic parameters in $\g(t)$ are replaced with their observed realizations, and $\W > 0$ is a weight parameter. Note that the objective here is a weighted combination of the stage-wise cost and a linear term of $\u(t)$, whose coefficient is positive when $\b(t)$ is large, and negative when $\b(t)$ is small.
We use the notation $\uhat(t)$ for the solution to \Pk{3} at time period $t$, $\u^\star(\Pk{3})$ for the sequence $\{\uhat(t): t\ge 1\}$, $\Jkt{3}{t}(\u(t))$ for the objective function of \Pk{3} at time period $t$, and $\Jkt{3}{t}^\star$ for the corresponding optimal cost.
In the rest of this section, we give conditions for parameters $\ks$ and $\W$ such that solving \Pk{3} online will result in a feasible $\{\b(t): t\ge 1 \}$ sequence (Section~\ref{sec:1busFeasible}), characterize the sub-optimality of $\u^\star(\Pk{3})$ as a function of $\ks$ and $\W$ and state the semidefinite program for identifying the optimal $\ks$ and $\W$ pair (Section~\ref{sec:1busPerformance}).

\subsection{Feasibility}\label{sec:1busFeasible}
We start with a structural result for the online optimization problem $\Pk{3}$. It follows from Lemma~\ref{lem:ana_sol} which is proved for general cost functions in Appendix~\ref{appendix:convexCost}.
\begin{lemma}\label{coro:tech_res}
At each time period $t$, the solution to \Pk{3}, $\uhat(t)$, satisfies
\begin{enumerate}
\item $\uhat(t) = \umin$ whenever $\la \bs(t) \ge - \W \Dl\g $,
\item $\uhat(t) = \umax$ whenever $\la \bs(t) \le - \W \Du\g $,
\end{enumerate}
where
\[
\Dl\g \defeq \inf\left\{\xi \in \partial_{\u(t)} \g(t)\middle|
\begin{array}{l}
 \u(t)\in [\umin,\umax],\,\, \\
\p(t, \ell)\in [\pmin(\ell), \pmax(\ell)],\, \forall \ell,\\
\d(t)\in [\dmin, \dmax],\\
\fn(t)\in\F, \,\, t \ge 1
\end{array}
\right\},
\]
and
\[
\Du\g \defeq \sup\left\{\xi \in \partial_{\u(t)} \g(t)\middle|
\begin{array}{l}
 \u(t)\in [\umin,\umax],\,\, \\
\p(t, \ell)\in [\pmin(\ell), \pmax(\ell)],\, \forall \ell,\\
\d(t)\in [\dmin, \dmax],\\
\fn(t)\in\F, \,\, t \ge 1
\end{array}
\right\}
\]
 are the greatest lower bound and the least upper bound of the sub-derivatives of $\g(t)$,
respectively. 
\end{lemma}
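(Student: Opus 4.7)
The plan is to recognize that the objective of \Pk{3}, namely $\Pi_t(\u) := \la\bs(t)\u + \W\g(t)$, has sub-derivative with respect to $\u$ equal to $\la\bs(t) + \W\,\partial_{\u(t)}\g(t)$, since the first term is linear in $\u$. The constants $\Dl\g$ and $\Du\g$ are by definition uniform bounds on this sub-derivative that hold for every realization of the stochastic parameters and every $\u(t)\in[\umin,\umax]$. A one-dimensional optimization over the compact interval $[\umin,\umax]$ is then resolved by the sign of this sub-derivative, and my goal is to show that in each of the two regimes the sign is uniform on the whole interval.

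For Case~1, I will show that the hypothesis $\la\bs(t)\ge -\W\Dl\g$ implies $\la\bs(t) + \W\xi \ge 0$ for every $\xi\in\partial_{\u(t)}\g(t)$ and every $\u\in[\umin,\umax]$, using only the defining property of $\Dl\g$ as a greatest lower bound. Hence every selection of the sub-derivative of $\Pi_t$ is non-negative on the interval; by a mean value theorem for generalized gradients, $\Pi_t$ is non-decreasing on $[\umin,\umax]$, so the minimizer is $\uhat(t)=\umin$. Case~2 is symmetric: $\la\bs(t)\le -\W\Du\g$ forces every sub-derivative of $\Pi_t$ to be non-positive, making $\Pi_t$ non-increasing, so $\uhat(t)=\umax$.

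The main subtlety is that $\g(t)$ is not assumed to be convex (see the Remark after Theorem~\ref{thm:main}), so the standard convex first-order optimality condition does not directly apply, and one must work with generalized sub-derivatives. Fortunately, because $\g(t)$ enters $\Pi_t$ through $\upos(t)$ and $\uneg(t)$ composed with the positive-part operator (see~(5)), $\Pi_t$ is in fact piecewise linear in $\u$ with finitely many slopes, each of which is a member of $\la\bs(t) + \W\,\partial_{\u(t)}\g(t)$. The monotonicity argument therefore reduces to checking that all these finitely many slopes share the same sign, which is exactly what the hypothesis on $\la\bs(t)$ delivers. Alternatively, one can simply invoke Lemma~\ref{lem:ana_sol} in Appendix~\ref{appendix:convexCost} (cited by the statement) and specialize it to the single-bus setting by taking $f(t)=0$ and ignoring the constraint $\fn(t)\in\F$.
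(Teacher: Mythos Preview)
Your proposal is correct and follows essentially the same approach as the paper: the paper proves this lemma precisely by invoking Lemma~\ref{lem:ana_sol} (which you already list as an alternative), and the proof of that lemma argues, just as you do, that the hypothesis forces every element of $\la\bs(t)+\W\,\partial_{\u(t)}\g(t)$ to have a fixed sign on $[\umin,\umax]$, whence the sub-differential inequality yields optimality at the corresponding endpoint. Your monotonicity phrasing and the paper's direct use of the sub-differential inequality are equivalent ways of extracting the same conclusion.
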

\begin{remark}[Evaluation of $\Du\g$, $\Dl\g$] Any finite lower bound and upper bound for the sub-derivative of the cost $\g(t)$ can be used as $\Dl \g$ and $\Du \g$, respectively. Here we use the greatest lower bound and least upper bound to provide the tightest performance bounds. For cases with simple cost functions, \eg, for idealized storage with $L = 1$, $\Dl \g$ and $\Du \g$ can be easily obtained from $\pmin (\ell)$,  $\pmax(\ell)$, and constants in the cost function $\g(t)$ (such as $\alC(\ell)$ and $\alD(\ell)$). In cases where $\g(t)$ is differentiable with respect to $\u(t)$, $\Du \g$ and $\Dl \g$ may be obtained by solving a simple optimization problem.
\end{remark}
This allows us to construct the following sufficient condition that will assure the feasibility of the $\{\b(t): t\ge 1\}$ sequence induced by $\u^\star(\Pk{3})$.
\begin{theorem}[Feasibility]\label{thm:feas_single}
Suppose the initial storage level satisfies $\b(1) \in [\bmin, \bmax]$, then the storage level sequence $\{\b(t): t\ge 1 \}$ induced by the sequence of storage operation $\u^\star(\Pk{3})$ is feasible with respect to storage level constraints, \ie, $\b(t) \in [\bmin, \bmax]$ for all $t$, provided that
\begin{align}
\ksmin\le & \ks \le \ksmax, \label{eq:ksbounds}\\
0 < & \W  \le \Wmax, \label{eq:Wbounds}
\end{align}
where
\begin{equation}\label{eq:ineq_1}
\!\!\ksmin\! = \frac{1}{\la} \left[\!-\W \Dl \g + \pos{\umax \!\!- (1-\la)\bmax}\right]\! -\! \bmax,\!\!\!\!\!\!
\end{equation}
\begin{equation}\label{eq:ineq_2}
\!\!\ksmax\! = \frac{1}{\la} \left[\!-\W \Du \g - \pos{(1-\la)\bmin \!\!- \umin}\right]\!- \!\bmin, \!\!\!\!\!\!
\end{equation}
and
\begin{align}\label{eq:W_max}
\!\!\Wmax\!\! =\! \frac{1}{\Du \g\! - \Dl \g} \Big[&\la (\bmax\! - \bmin)\! -\! \pos{(1-\la)\bmin\! -\! \umin}\nn\\
& \quad- \pos{\umax - (1-\la) \bmax}\Big].
\end{align}
\end{theorem}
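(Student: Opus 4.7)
The plan is to argue by induction on $t$ that $\b(t)\in[\bmin,\bmax]$ for every $t\ge 1$; the base case is the hypothesis $\b(1)\in[\bmin,\bmax]$. Throughout, I abbreviate the two thresholds supplied by Lemma~\ref{coro:tech_res} as $\Bh := -\ks - (\W/\la)\Dl\g$ and $\Bl := -\ks - (\W/\la)\Du\g$, so that the lemma reads: $\uhat(t)=\umin$ whenever $\b(t)\ge \Bh$ (equivalently $\la\bs(t)\ge -\W\Dl\g$), and $\uhat(t)=\umax$ whenever $\b(t)\le \Bl$.

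I first establish $\b(t+1)\le \bmax$, given $\b(t)\le \bmax$. Split into two cases. If $\b(t)\ge \Bh$, the lemma forces $\uhat(t)=\umin$, so the storage dynamics combined with part~2 of Assumption~\ref{assume:feas} give $\b(t+1)=\la\b(t)+\umin\le \la\bmax+\umin\le \bmax$. If $\b(t)<\Bh$, nothing forces $\uhat(t)$ below $\umax$, so in the worst case $\b(t+1)\le \la\b(t)+\umax$, and I must show this is $\le \bmax$ for every $\b(t)\le \min(\bmax,\Bh)$. The condition $\ks\ge \ksmin$ is engineered precisely for that: when $\umax>(1-\la)\bmax$, $\ksmin$ is the smallest $\ks$ making $\la\Bh+\umax\le \bmax$; when $\umax\le(1-\la)\bmax$, the positive part in \eqref{eq:ineq_1} vanishes, $\ksmin$ reduces to the weaker requirement $\Bh\le \bmax$, and then $\la b+\umax\le \la\bmax+\umax\le \bmax$ holds automatically for every $b\le \bmax$.

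The lower bound $\b(t+1)\ge \bmin$ is handled by a mirror-image argument: if $\b(t)\le \Bl$ the lemma gives $\uhat(t)=\umax$ and part~1 of Assumption~\ref{assume:feas} yields $\b(t+1)\ge \la\bmin+\umax\ge \bmin$; if $\b(t)>\Bl$ the worst case $\uhat(t)=\umin$ requires $\la\max(\bmin,\Bl)+\umin\ge \bmin$, and this is precisely what $\ks\le \ksmax$ secures, with the positive part $\pos{(1-\la)\bmin-\umin}$ in \eqref{eq:ineq_2} playing the mirror role to the one in $\ksmin$.

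It remains to check that the hypothesis $0<\W\le \Wmax$ is exactly what makes $[\ksmin,\ksmax]$ non-empty, so that a valid shift $\ks$ exists. Writing out $\ksmin\le \ksmax$ using \eqref{eq:ineq_1}--\eqref{eq:ineq_2}, multiplying through by $\la$, and isolating $\W$ reproduces the bound \eqref{eq:W_max}; Assumption~\ref{assume:W_max} together with Assumption~\ref{assume:feas} is what guarantees $\Wmax>0$. The main obstacle in writing out the full proof is the four-way case split driven by the two positive parts, each of which flips on or off depending on whether Assumption~\ref{assume:feas} is tight at the corresponding boundary; however, no idea beyond Lemma~\ref{coro:tech_res} and the two parts of Assumption~\ref{assume:feas} is required.
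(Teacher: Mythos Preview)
Your proposal is correct and follows essentially the same route as the paper: induction on $t$, invoking Lemma~\ref{coro:tech_res} to pin down $\uhat(t)$ at the extremes and Assumption~\ref{assume:feas} to close the easy cases, with the definitions of $\ksmin$, $\ksmax$, $\Wmax$ doing the rest. The only cosmetic difference is that the paper splits into three cases on the value of $\la\bs(t)$ and proves both bounds in each, whereas you handle the upper and lower bounds separately with two cases apiece and make the positive-part subcases explicit; the paper absorbs those subcases into the inequality $-(\umax-(1-\la)\bmax)^{+}+\la\bmax+\umax\le\bmax$ (and its mirror) without unpacking them.
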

\begin{proof}
The result is proved by induction, where Lemma~\ref{coro:tech_res} is used to partially characterize the $\uhat(t)$ sequence.
See Appendix~\ref{sec:app:1bus} for more details.
\end{proof}

\subsection{Performance}\label{sec:1busPerformance}
In the previous result, we have established that $\u^\star(\Pk{3})$ is feasible for \Pk{1} as long as parameters $\ks$ and $\W$ satisfy \eqref{eq:ksbounds} and \eqref{eq:Wbounds}. In the next theorem, we characterize the sub-optimality of $\u^\star(\Pk{3})$ for fixed $\ks$ and $\W$.
\begin{theorem} [Performance]\label{thm:perf_lyap}
The sub-optimality of storage operation $\u^\star(\Pk{3})$ is bounded by $\M(\ks)/\W$, that is
\begin{equation}\label{eq:1busiidperf_bdd}
\Jk{1}^\star \le  \Jk{1}(\u^\star(\Pk{3})) \le \Jk{1}^\star + \M(\ks)/\W,
\end{equation}
where
\begin{align*}
&\M(\ks)=\Mone(\ks)+\la(1-\la)\Mtwo(\ks),\\
&\Mone(\ks) =\! \frac{1}{2}\! \max\left(\! \left(\umin\!+(1-\la)\ks\right)^2\!\!,\left(\umax\!+(1-\la)\ks\right)^2\! \right)\!,\\
&\Mtwo(\ks)= \max\left( \left(\bmin+\ks\right)^2,\left(\bmax+\ks\right)^2 \right).
\end{align*}
\end{theorem}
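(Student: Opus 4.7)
The plan is to apply the drift-plus-penalty method from Lyapunov optimization. First, define the quadratic Lyapunov function $L(t) = \tfrac{1}{2}\bs(t)^2$ on the shifted storage state $\bs(t)=\b(t)+\ks$; from the dynamics \eqref{eq:storDyn} one has $\bs(t+1) = \la\bs(t) + y(t)$ with $y(t) := \uhat(t) + (1-\la)\ks$. Expanding $L(t+1)-L(t)$ decomposes the one-step drift into three pieces: a non-positive quadratic $-\tfrac{1-\la^2}{2}\bs(t)^2$, a piece linear in $\uhat(t)$ equal to $\la\bs(t)\uhat(t) + \la(1-\la)\ks\bs(t)$, and a quadratic remainder $\tfrac{1}{2}y(t)^2$, which is bounded by $\Mone(\ks)$ since $\uhat(t)\in[\umin,\umax]$ and $\Mone(\ks)$ is the maximum of $\tfrac{1}{2}(\u+(1-\la)\ks)^2$ over that interval.

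Next I would add the penalty $\W\g(t,\uhat(t))$ to the drift and invoke the optimality of $\uhat(t)$ for \Pk{3}: since $\uhat(t)$ minimizes $\la\bs(t)\u + \W\g(t,\u)$ over the feasible set $[\umin,\umax]$, I can upper bound by substituting any feasible alternative, in particular the stationary disturbance-only policy $\ustat(t)$ supplied by Lemma~\ref{lem:stat}. Taking expectations, the key observation is that under Assumption~\ref{assume:singleBusProof} the disturbances at time $t$ are independent of the history, so $\ustat(t)$ is independent of $\bs(t)$; hence $\expec[\bs(t)\ustat(t)] = \expec[\bs(t)]\,\expec[\ustat(t)]$. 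Combining this with the stray cross term $\la(1-\la)\ks\expec[\bs(t)]$ produced earlier yields the single expression $\la\expec[\bs(t)]\{\expec[\ustat(t)] + (1-\la)\ks\}$, while $\W\expec[\g(t,\ustat(t))] = \W\Jk{2}^\star$ by the second assertion of Lemma~\ref{lem:stat}.

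The main technical step is to bound this residual cross expression by $\la(1-\la)\Mtwo(\ks)$. By Theorem~\ref{thm:feas_single}, the state induced by $\u^\star(\Pk{3})$ satisfies $\bs(t)\in[\bmin+\ks,\bmax+\ks]$, hence $|\expec[\bs(t)]|\le\sqrt{\Mtwo(\ks)}$. By Lemma~\ref{lem:stat}, $\expec[\ustat(t)]\in[(1-\la)\bmin,(1-\la)\bmax]$, so the factor $\expec[\ustat(t)] + (1-\la)\ks$ lies in $(1-\la)[\bmin+\ks,\bmax+\ks]$ and has magnitude at most $(1-\la)\sqrt{\Mtwo(\ks)}$. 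Multiplying gives $\la(1-\la)\Mtwo(\ks)$ exactly. Together with the $\Mone(\ks)$ remainder and dropping the non-positive $-\tfrac{1-\la^2}{2}\expec[\bs(t)^2]$, this yields the per-slot drift-plus-penalty inequality
\[
\expec[\Delta(t)] + \W\,\expec[\g(t,\uhat(t))] \,\le\, \W\,\Jk{2}^\star + \M(\ks).
\]

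Finally I would telescope: summing $t=1,\dots,T$ cancels all but the boundary Lyapunov values, and by feasibility $L(T+1)\le \Mtwo(\ks)/2$, so dividing by $\W T$ and sending $T\to\infty$ gives $\Jk{1}(\u^\star(\Pk{3})) \le \Jk{2}^\star + \M(\ks)/\W \le \Jk{1}^\star + \M(\ks)/\W$, the last inequality because \Pk{2} is a relaxation of \Pk{1}. The lower bound $\Jk{1}^\star \le \Jk{1}(\u^\star(\Pk{3}))$ is immediate from the feasibility guarantee of Theorem~\ref{thm:feas_single}. The hard part is cleanly absorbing the stray cross term $\la(1-\la)\ks\bs(t)$: it is not killed by the algorithm's local optimization and does not fit into $\Mone$, so one has to carry it through the expectation step and pair it with $\la\expec[\bs(t)]\expec[\ustat(t)]$, whereupon the two interval constraints on $\bs(t)$ and on $\expec[\ustat(t)]$ conspire to produce precisely the $\la(1-\la)\Mtwo(\ks)$ contribution in $\M(\ks)$.
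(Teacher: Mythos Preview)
Your proposal is correct and follows essentially the same drift-plus-penalty argument as the paper: the same quadratic Lyapunov function, the same decomposition of the drift into a non-positive quadratic, a cross term, and a remainder bounded by $\Mone(\ks)$, the same substitution of $\ustat(t)$ via optimality of \Pk{3}, and the same pairing of the stray $\la(1-\la)\ks\bs(t)$ term with $\la\bs(t)\expec[\ustat(t)]$ to produce the $\la(1-\la)\Mtwo(\ks)$ contribution. The only cosmetic difference is that the paper carries conditional expectations given $\bs(t)$ before taking full expectations, whereas you take unconditional expectations directly; both are valid under the i.i.d. assumption.
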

\begin{proof}
A quadratic Lyapunov function is constructed. The relation between the Lyapunov drift and the objective of \Pk{3} is exploited, which in turn relates to the objective of \Pk{2} and so \Pk{1}. Appendix~\ref{sec:app:1bus} contains the whole proof.
\end{proof}


The theorem above guarantees that the worst-case cost (among different uncertainty realizations) of our online algorithm is bounded above by $\Jk{1}^\star + \M(\ks)/\W$. The sub-optimality bound $\M(\ks)/\W$ reduces to a much simpler form if $\la = 1$.
\begin{remark}[Sub-Optimality Bound, $\la =1 $]\label{remark:subo:la1}
For a storage with $\la = 1$, we have \[\M \defeq \M(\ks) = (1/2) \max((\umin)^2, (\umax)^2),\] and the online algorithm is no worse than $\M/\W$ sub-optimal.
 In this case, one would optimize the performance by setting
\[
\W=\Wmax = \frac{(\bmax - \bmin)-(\umax-\umin)}{\Du \g\! - \Dl \g},
\]
and the corresponding
interval $[\ksmin,\ksmax]$ turns out to be a singleton, where
\[
\ksmin=\ksmax=-\frac{\Du \g(\bmax-\umax)+\Dl \g(\umin-\bmin)}{\Du \g\! - \Dl \g}.
\]
Let $\bmax - \bmin = \rho (\umax - \umin)$. Suppose $|\umax| = |\umin|$. For efficient storage ($\la = 1$),  the sub-optimality bound is
\[
\frac{\M}{\W}= \frac{(1/2)(\Du \g - \Dl g)(\umax)^2}{(\bmax- \bmin) - (\umax - \umin)} = \frac{\Du \g - \Dl g}{4(\rho - 1)} \umax.
\]
For fixed $\umax$, as storage capacity increases , \ie,  $\rho \to \infty$, the sub-optimality $(\M/\W) \to 0$. If $\umax$ and $\bmax$ increases with their ratio $\rho$ fixed, the bound increases linearly with $\umax$.
\end{remark}

The remaining  case $\la \in (0,1)$ requires solving an optimization program to identify the bound-minimizing parameter pair $(\ks, \W)$. In the next result, we state a semidefinite program to find $(\ks^\star, \W^\star)$ that solves the following  parameter optimization program
\begin{subequations}\label{P3:PO}
\begin{align*}
\opttag{P3-PO:}\minimize & \quad \M(\ks)/\W\\
\st & \quad  \ksmin\le \ks \le \ksmax,\,\, 0< \W \le \Wmax.
\end{align*}
\end{subequations}
In the current form, this program appears to be non-convex. The next result reformulates {\bf P3-PO} into a semidefinite program. Note that $\ksmin$ and $\ksmax$ are linear functions of $\W$ as defined in \eqref{eq:ineq_1} and \eqref{eq:ineq_2}.

\begin{lemma}[Semidefinite Reformulation of \Pk{3}-{\bf PO}]\label{SDP_P3_PO}
Let symmetric positive definite matrices $\Xumin$, $\Xumax$, $\Xbmin$, and $\Xbmax$ be defined as follows
\begin{equation*}
\!\!\Xudot\!\! = \!\!\begin{bmatrix}
\None&\!\!\udot+(1-\la)\ks\\
*& 2\W
\end{bmatrix}, \,\,
\Xbdot\!\! = \!\!\begin{bmatrix}
\Ntwo&\!\!\bdot+\ks\\
*& \W
\end{bmatrix},\!\!
\end{equation*}
where $(\cdot)$ can be either $\max$ or $\min$, and $\None$ and $\Ntwo$ are auxilliary variables. Then \Pk{3}-{\bf PO} can be solved via the following semidefinite program
\begin{subequations}\label{prob:sdp}
\begin{align}
\!\!\emph{\minimize} \quad & \None+\la(1-\la)\Ntwo \!\!\\
\!\!\emph{\st} \quad & \ksmin\le  \ks \le \ksmax,\,\, 0 <  \W  \le \Wmax, \!\!\\
& \Xumin,\Xumax, \Xbmin, \Xbmax \succeq 0. \!\!
\end{align}
\end{subequations}
\end{lemma}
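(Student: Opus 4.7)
The plan is to reformulate \Pk{3}-\textbf{PO} by introducing two epigraph-style auxiliary variables that bound the two pieces of the objective, and then to realize each resulting quadratic-over-linear inequality as a Schur complement of the specified $2\times 2$ positive semidefinite matrix.

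First I would split the objective as $\M(\ks)/\W = \Mone(\ks)/\W + \la(1-\la)\Mtwo(\ks)/\W$. By definition of the inner maxima in $\Mone$ and $\Mtwo$, the inequality $\None \ge \Mone(\ks)/\W$ is equivalent to the pair
\begin{equation*}
\None \ge \frac{(\umin+(1-\la)\ks)^2}{2\W}, \qquad \None \ge \frac{(\umax+(1-\la)\ks)^2}{2\W},
\end{equation*}
and similarly $\Ntwo \ge \Mtwo(\ks)/\W$ is equivalent to the pair $\Ntwo \ge (\bmin+\ks)^2/\W$ and $\Ntwo \ge (\bmax+\ks)^2/\W$. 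Minimizing the linear surrogate $\None+\la(1-\la)\Ntwo$ subject to these four inequalities (together with the original bounds on $\ks$ and $\W$) produces the same optimum as \Pk{3}-\textbf{PO}: one direction is immediate by choosing $\None = \Mone(\ks)/\W$ and $\Ntwo = \Mtwo(\ks)/\W$ at any feasible $(\ks,\W)$, while the converse follows because, at any feasible point of the surrogate, the inequalities force $\None+\la(1-\la)\Ntwo \ge \M(\ks)/\W$.

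Second, because $\W>0$ on the feasible region, the Schur complement gives, for each choice of $(\cdot)\in\{\min,\max\}$,
\begin{equation*}
\None \ge \frac{(\udot+(1-\la)\ks)^2}{2\W} \iff \Xudot \succeq 0, \qquad \Ntwo \ge \frac{(\bdot+\ks)^2}{\W} \iff \Xbdot \succeq 0,
\end{equation*}
with $\Xudot$ and $\Xbdot$ as in the statement. Combining the four PSD constraints with the bounds on $(\ks,\W)$, which are affine since $\ksmin,\ksmax$ are linear in $\W$ by \eqref{eq:ineq_1}-\eqref{eq:ineq_2} and $\Wmax$ is a constant by \eqref{eq:W_max}, and with the linear objective $\None+\la(1-\la)\Ntwo$, yields the SDP \eqref{prob:sdp}, whose optimum coincides with that of \Pk{3}-\textbf{PO}.

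The main obstacle is simply keeping the Schur complement application legitimate. The equivalence of each quadratic-over-linear inequality with its PSD representation relies on $\W>0$; at $\W=0$ the PSD constraint would instead force the off-diagonal entries to vanish, which would not correspond to the original problem. This is precisely why the strict bound $0<\W\le\Wmax$ is carried from \Pk{3}-\textbf{PO} into the SDP. With this boundary point handled, the two reformulation steps compose to give the desired equality of optima.
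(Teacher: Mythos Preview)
Your proposal is correct and follows essentially the same approach as the paper: introduce epigraph variables $\None,\Ntwo$ for $\Mone(\ks)/\W$ and $\Mtwo(\ks)/\W$, expand each max into two inequalities, and apply the Schur complement (valid because $\W>0$) to obtain the four $2\times 2$ PSD constraints. The paper's proof is terser but identical in content.
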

\begin{proof}
The result follows from Schur complement. See Appendix~\ref{sec:app:1bus} for details.
\end{proof}

We close this section by discussing several implications of the performance theorem.
\begin{remark}[Optimality at the Fast-Acting Limit]
Let the length of each time period be $\Delta t$.
At the limit $\Delta t \to 0$, the online algorithm is optimal. Indeed, as discussed in Section~\ref{sec:problem}, both $|\umin|$ and $|\umax|$ are linear in $\Delta t$, such that $|\umax| \to 0$ and $|\umin|\to 0 $ as $\Delta t\to 0$. Meanwhile, $\la \to 1$ as $\Delta t \to 0$. So by Remark~\ref{remark:subo:la1}, it is easy to verify that the sub-optimality $\M/\W$ converges to zero as $\Delta t \to 0$.
\end{remark}

\begin{remark}[Operational Value of Storage and Percentage Cost Savings]\label{remark:vos}
Operational Value of Storage (VoS) is broadly defined as the savings in the long term system cost due to storage operation. Such an index is usually calculated by assuming storage is operated optimally. In stochastic environments, the optimal system cost with storage operation is hard to obtain in general settings. In our notations, let $\u^\mathrm{NS}$ denote the sequence $\{\u(t): \u(t) = 0,  t\ge 1 \}$ which corresponds to no storage operation. Then
\[
\mathrm{VoS} = \Jk{1}(\u^\mathrm{NS}) - \Jk{1}^\star,
\]
and it can be estimated by the interval 
\[
\left[\Jk{1}(\u^\mathrm{NS})\! -\! \Jk{1}(\u^\star(\Pk{3})),\,\,  \Jk{1}(\u^\mathrm{NS})\! -\! \Jk{1}(\u^\star(\Pk{3})) \!+ \!\frac{\M}{\W} \right].
\]
Additionally, for a storage operation sequence $\u$, the percentage cost savings due to storage can then be defined by $(\Jk{1}(\u^\mathrm{NS}) - \Jk{1}(\u))/\Jk{1}(\u^\mathrm{NS})$. An upper bound of this for any storage control policy can be obtained via $(\Jk{1}(\u^\mathrm{NS}) - \Jk{1}(\u^\star(\Pk{3})) + \M/\W)/\Jk{1}(\u^\mathrm{NS})$, which to an extent summarizes the limit of a storage system in providing cost reduction.
\end{remark}

\section{Numerical Experiments}
\subsection{Single Storage Example}
We first test our algorithm in a simple setting where the analytical solution for the optimal control policy is available, so that the algorithm performance can be compared against the true optimal costs. We consider the problem of using a single energy storage to minimize the energy imbalance as studied in \cite{SuEGTPS}, where it is shown that greedy storage operation is optimal if $\la = 1$ and if the following cost is considered
\[
\g (t)  = |\d(t) - (1/\muC)\upos(t) + \muD\uneg(t)|.
\]
As in \cite{SuEGTPS}, we specify storage parameters in per unit, and  $\bmin = 0$. Let  $\muC = \muD = 1$ so that the parameterization of storage operation here is equivalent to that of \cite{SuEGTPS}. We assume each time period represents an hour, and $-\umin = \umax = (1/10) \bmax$.
In order to evaluate the performance, we simulate the $\d(t)$ process by drawing i.i.d. samples from zero-mean Laplace distribution, with standard deviation $\sigma_\d = 0.149$ per unit obtained from NREL data \cite{SuEGTPS}. \rev{The time horizon for the simulation is chosen to be $T=1000$.} Figure~\ref{fig:result_1stor_a} depicts the performance of the our algorithm and the optimal cost $\Jk{1}^\star$ obtained from the greedy policy,  where it is shown that the algorithm performance is near-optimal, and better than what the (worst-case) sub-optimality bound predicts. \footnote{By an abuse of notation, in this section, we use $\Jk{1}^\star$ and $\Jk{1}$ to denote the results from simulation, which are estimates of the true expectations.}

A slight modification of the cost function would render a problem which does not have an analytical solution. Consider the setting where only unsatisfied demand is penalized with a higher penalty during the day ($7$ am to $7$ pm):
\begin{equation}\label{eq:inhomo:cost}
\!\!\g (t)\!\! = \!\!
\begin{cases}
3\neg{\d(t)\! -\! (\upos(t)/\muC) + \muD\uneg(t)}\!\!,\!\! &t\in \Tday,  \\
\neg{\d(t)\! -\! (\upos(t)/\muC) + \muD\uneg(t)}\!\!, \!\!& \mbox{otherwise},
\end{cases}
\end{equation}
where $\Tday = \{t\ge 1: 7 \le  t\bmod 24 < 19\}$.
We run the same set of tests above, with the modification that now $\muC = \muD = 0.95$. Note that the greedy policy is only a sub-optimal heuristic for this case. Figure~\ref{fig:result_1stor_b} shows our online algorithm performs significantly better than the greedy algorithm. The costs of our algorithm together with the lower bounds give a narrow envelope for the optimal average cost $\Jk{1}^\star$ in this setting, which can be used to evaluate the performance of other sub-optimal algorithms numerically. In both experiments, we also plot the costs of predictive/nominal storage control, whose solution can be shown to be $\u(t) = 0 $ for all $t$. Consequently, the costs of such operation rule are the same as the system costs when there is no storage.
\begin{figure}[htbp]
\centering
\subfigure[]{ \label{fig:result_1stor_a}
\centering
\psfrag{SM}[cc][Bl]{$\bmax$}
\psfrag{AverageCost}[cc][Bt]{Average cost $\Jk{1}$}
\psfrag{NoStorage}{\scriptsize No storage}
\psfrag{Lyapunov}{\scriptsize Lyapunov}
\psfrag{Greedy}{\scriptsize Greedy ($\Jk{1}^\star$)}
\psfrag{LowerBoundLegend}{\scriptsize Lower bound}
\includegraphics[width=.33\textwidth]{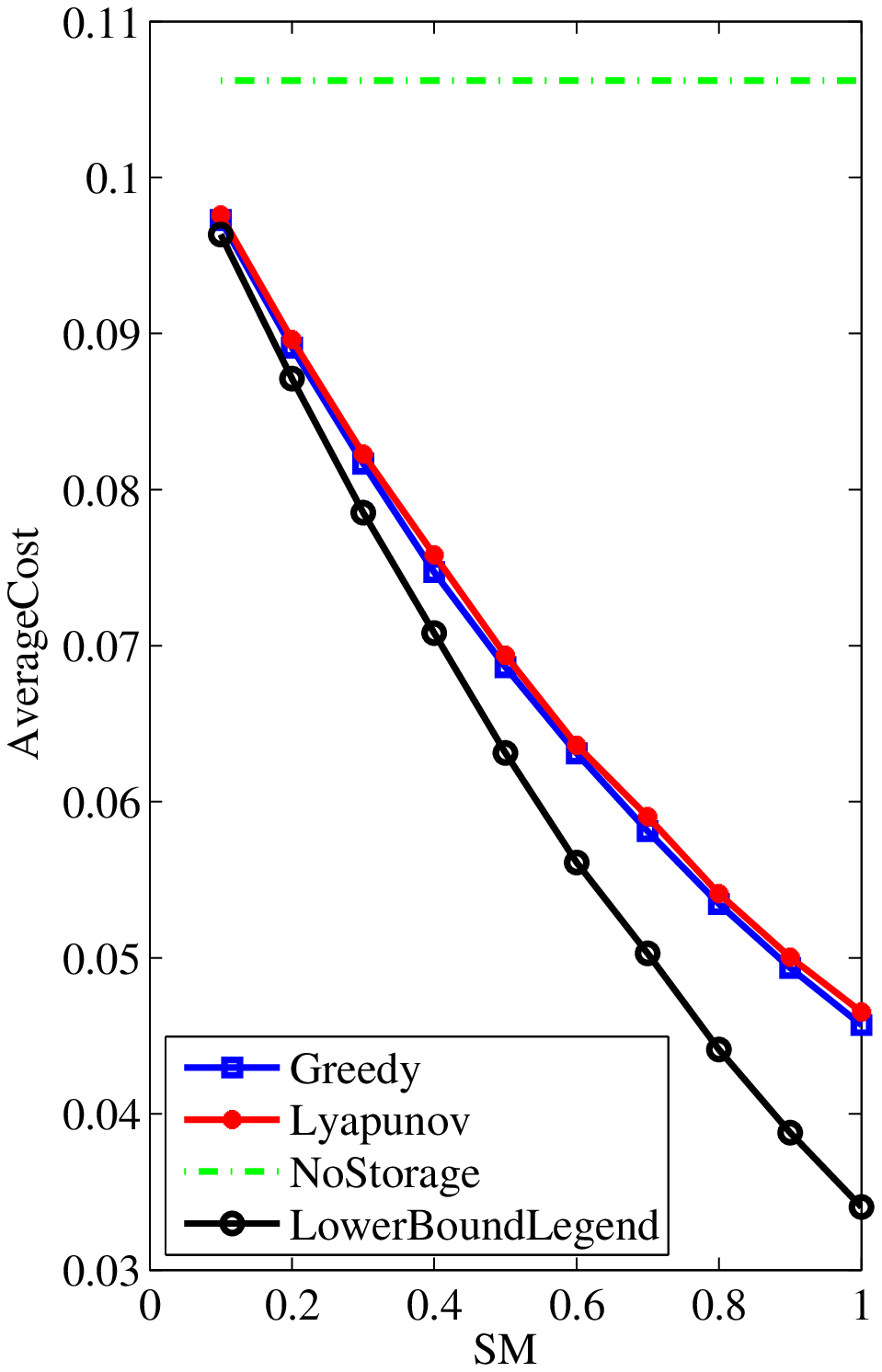}}
\subfigure[]{ \label{fig:result_1stor_b}
\centering
\psfrag{SM}[cc][Bl]{$\bmax$}
\psfrag{AverageCost}[cc][Bt]{Average cost $\Jk{1}$}
\psfrag{NoStorage}{\scriptsize No storage}
\psfrag{Lyapunov}{\scriptsize Lyapunov}
\psfrag{Greedy}{\scriptsize Greedy}
\psfrag{LowerBoundLegend}{\scriptsize Lower bound}
\includegraphics[width=.33\textwidth]{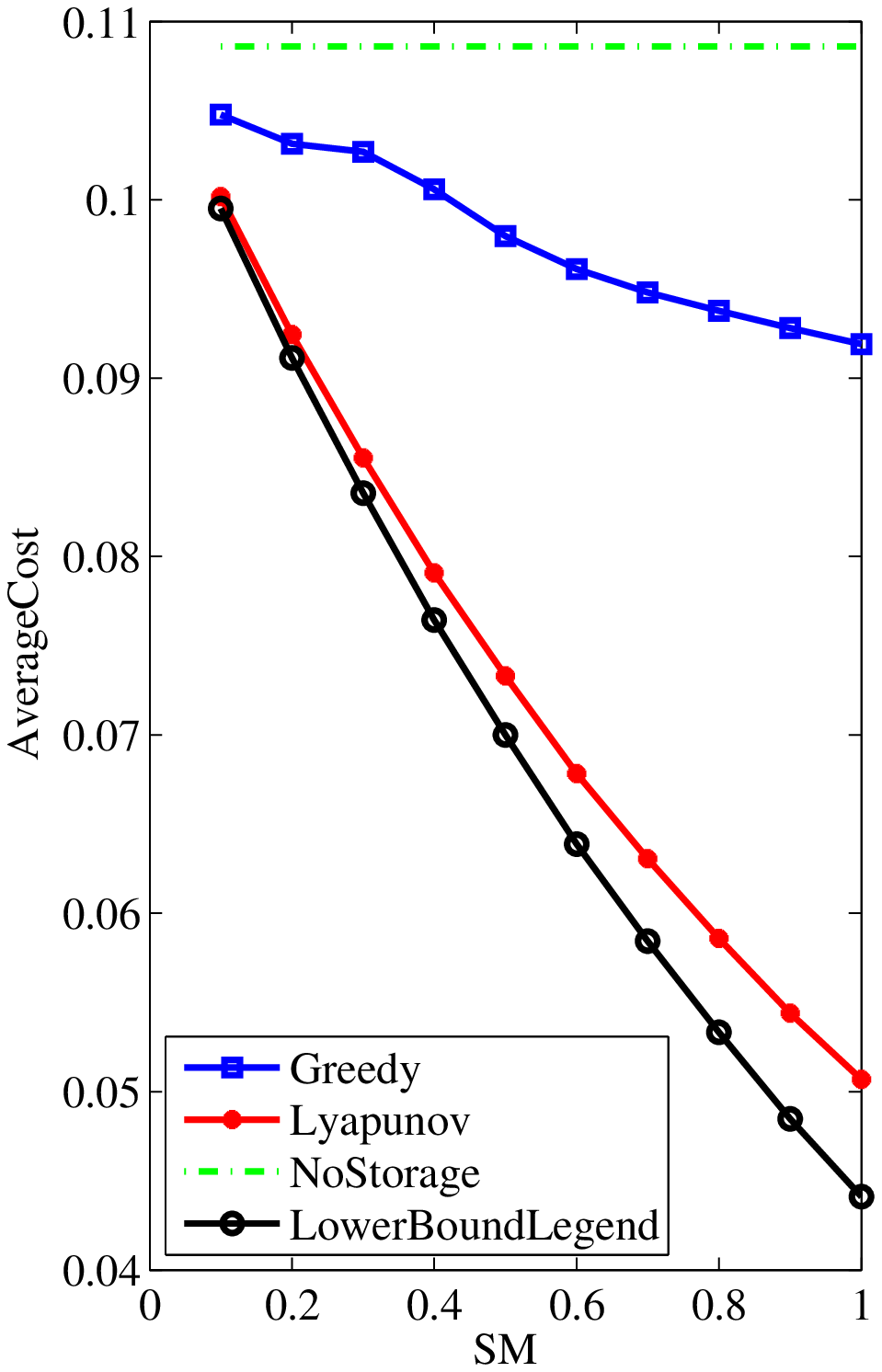}}
\caption{Performance of algorithms in a single bus network. \rev{Note that, different from the setup in Remark~\ref{remark:subo:la1}, we scale $\umin$, $\umax$ together with $\bmax$ in this and the following numerical examples.}}\label{fig:3}
\end{figure}

Figure~\ref{fig:4} translates the results in Figure~\ref{fig:3} into percentage cost savings due to storage operation, which are computed following the discussion in Remark~\ref{remark:vos}. Using the cost of our online algorithm and the theoretical sub-optimality bound, we obtain an upper bound of percentage cost reduction of energy storage for {\it any control policy} (see black curve in each panel of Figure~\ref{fig:4}). It indicates the systemic limit of using storage to provide cost reduction,  and is useful for system design considerations especially when the optimal cost cannot be calculated efficiently.
\begin{figure}[htbp]
\centering
\subfigure[]{ \label{fig:result_1stor_perc_a}
\centering
\psfrag{SM}[cc][Bl]{$\bmax$}
\psfrag{PercentageSaving}[cc][Bt]{Percentage cost savings (\%)}
\psfrag{NoStorage}{\scriptsize No storage}
\psfrag{Lyapunov}{\scriptsize Lyapunov}
\psfrag{Greedy}{\scriptsize Greedy ($\Jk{1}^\star$)}
\psfrag{LowerBoundLegend}{\scriptsize Lower bound}
\psfrag{UpperBoundLegend}{\scriptsize Upper bound}
\includegraphics[width=.33\textwidth]{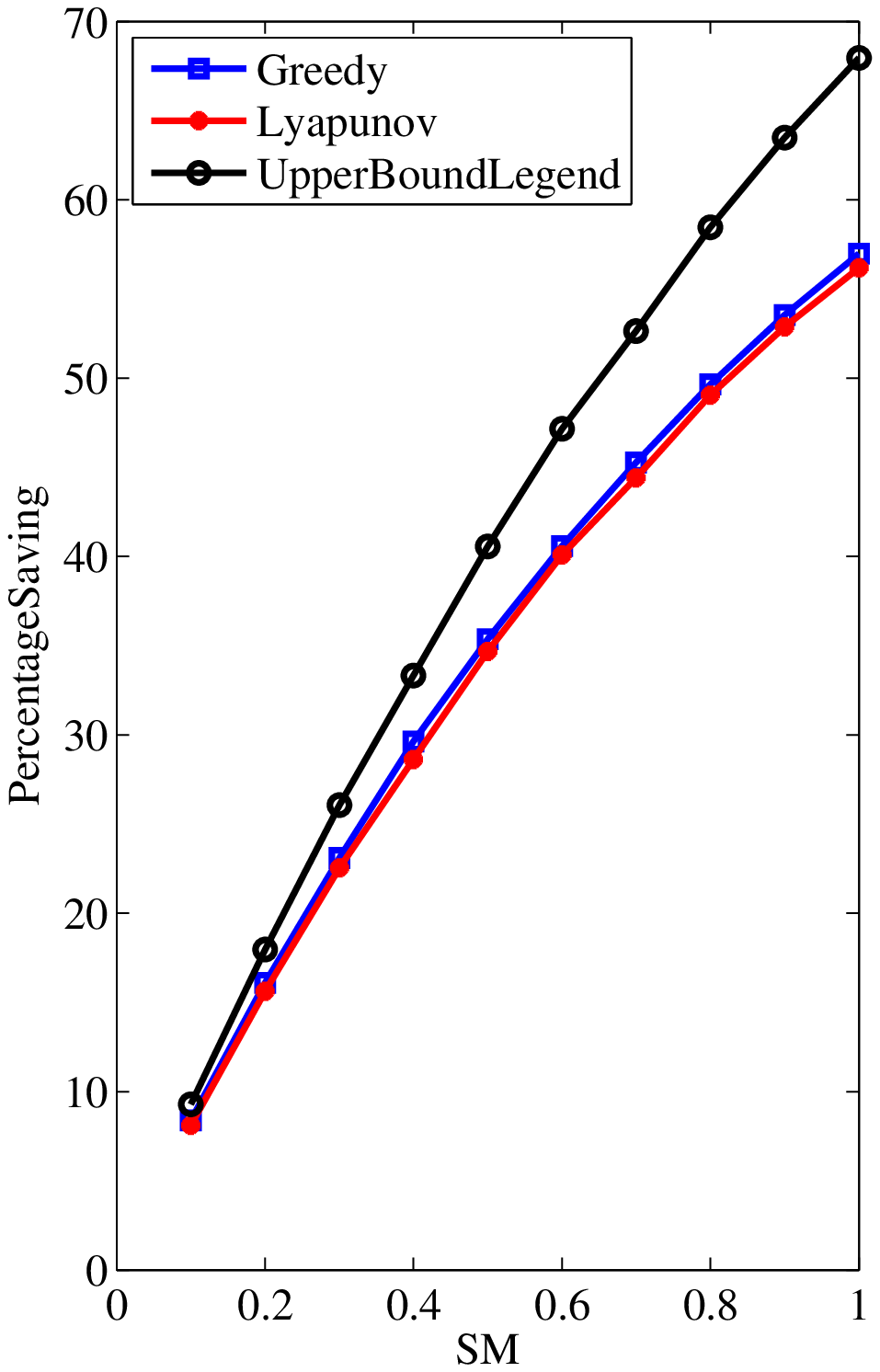}}
\subfigure[]{ \label{fig:result_1stor_perc_b}
\centering
\psfrag{SM}[cc][Bl]{$\bmax$}
\psfrag{PercentageSaving}[cc][Bt]{Percentage cost savings (\%)}
\psfrag{NoStorage}{\scriptsize No storage}
\psfrag{Lyapunov}{\scriptsize Lyapunov}
\psfrag{Greedy}{\scriptsize Greedy}
\psfrag{LowerBoundLegend}{\scriptsize Lower bound}
\psfrag{UpperBoundLegend}{\scriptsize Upper bound}
\includegraphics[width=.33\textwidth]{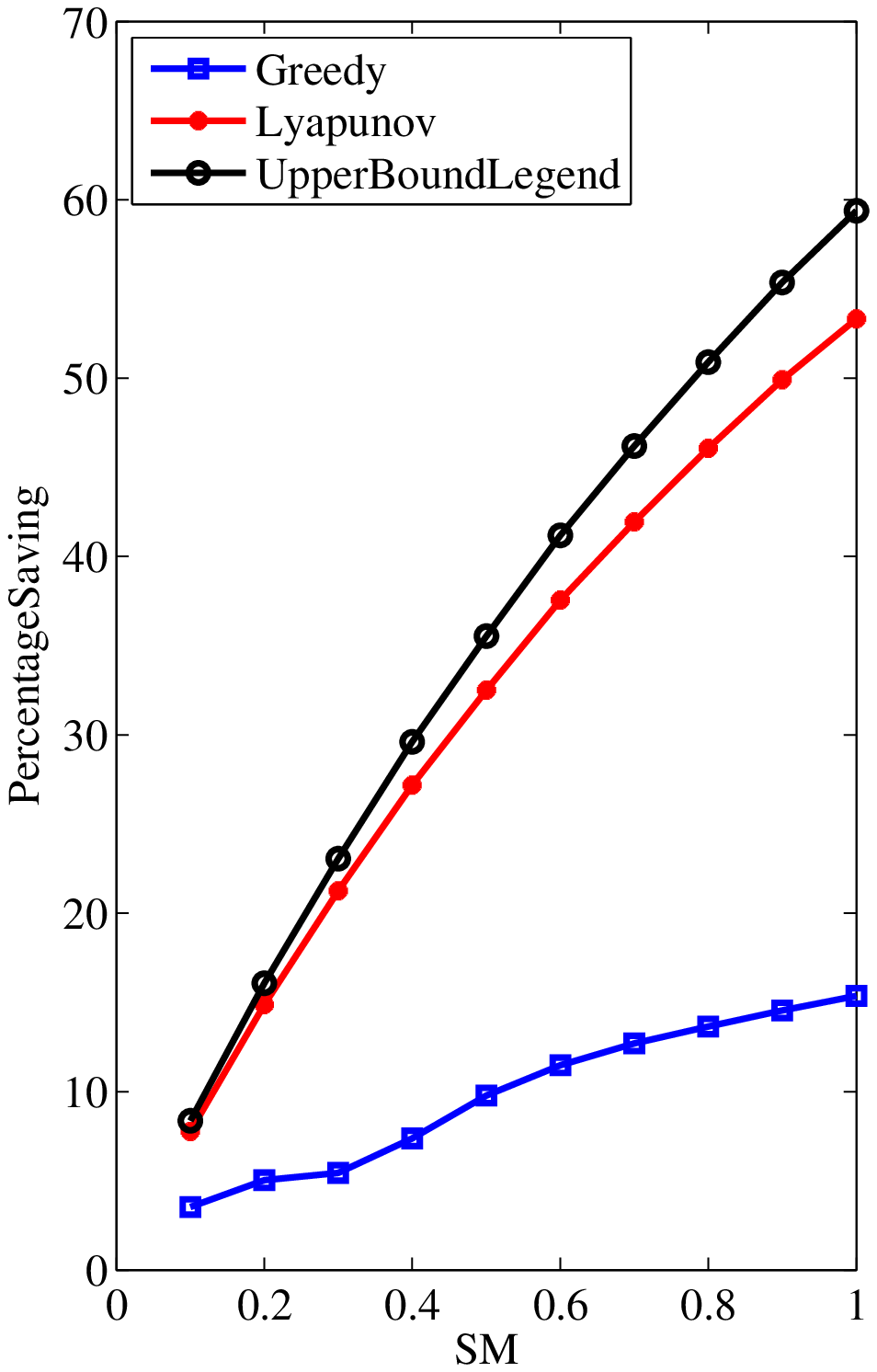}}
\caption{Percentage cost savings of a single storage operated for balancing.}\label{fig:4}
\end{figure}

\subsection{Storage Network Example}
We consider a setting similar to that in the single storage numerical example, in which now distributed storages are coordinated to minimize the power imbalance over a tree network with $N$ buses. We assume the storage network is homogeneous, \ie, the storage installed on each bus of the network has the same specifications and the same cost functions. Two cases with different cost functions are considered. In the first case, time homogeneous costs of the form
\begin{equation} \label{eq:cost:net:homo}
\g^\mathrm{H}_v(t) = \neg{\d_v(t) - (1/\muC_{v})\upos_{v}(t) + \muD_v\uneg_{v}(t) + \sum_{e\sim v} f_e (t)},
\end{equation}
are considered,
where $\muC_v = \muD_v = 0.95$, and
$\d_v(t)$ is i.i.d. following the same distribution as in the single storage example. In the second case, each bus has a cost function similar to \eqref{eq:inhomo:cost}:
\begin{equation*}
\g_v (t) = 
\begin{cases}
3 \g^\mathrm{H}_v(t), &t\in \Tday,  \\
\g^\mathrm{H}_v(t) , & \mbox{otherwise},
\end{cases}
\end{equation*}
with $\g^\mathrm{H}_v(t)$ as defined in \eqref{eq:cost:net:homo}.
We consider non-idealized storages which are operated frequently such that $\la_v = 0.999$ for all  $v\in \V$. As in the single storage example, we fix $-\umin_v = \umax_v = (1/10) \bmax_v$. The storages are connected in a star network, with $N= 5$ and $\fmax_e = \sigma_\d$ for each line $e\in \E$. \rev{The time horizon for the simulation is chosen to be $T=1000$.} Figure~\ref{fig:5} shows the percentage cost savings, where it is demonstrated that the online algorithm performs consistently superior to the greedy heuristic, and leads to percentage cost savings values that are close to the derived upper bound. Therefore near-optimal performance is achieved by our algorithm in both cases.

\begin{figure}[htbp]
\centering
\subfigure[]{ \label{fig:result_Nstor_perc_a}
\centering
\psfrag{SMR}[cc][Bl]{Total $\bmax$ in network}
\psfrag{PercentageSaving}[cc][Bt]{Percentage cost savings (\%)}
\psfrag{NoStorage}{\scriptsize No storage}
\psfrag{Lyapunov}{\scriptsize Lyapunov}
\psfrag{Greedy}{\scriptsize Greedy}
\psfrag{LowerBoundLegend}{\scriptsize Lower bound}
\psfrag{UpperBoundLegend}{\scriptsize Upper bound}
\includegraphics[width=.33\textwidth]{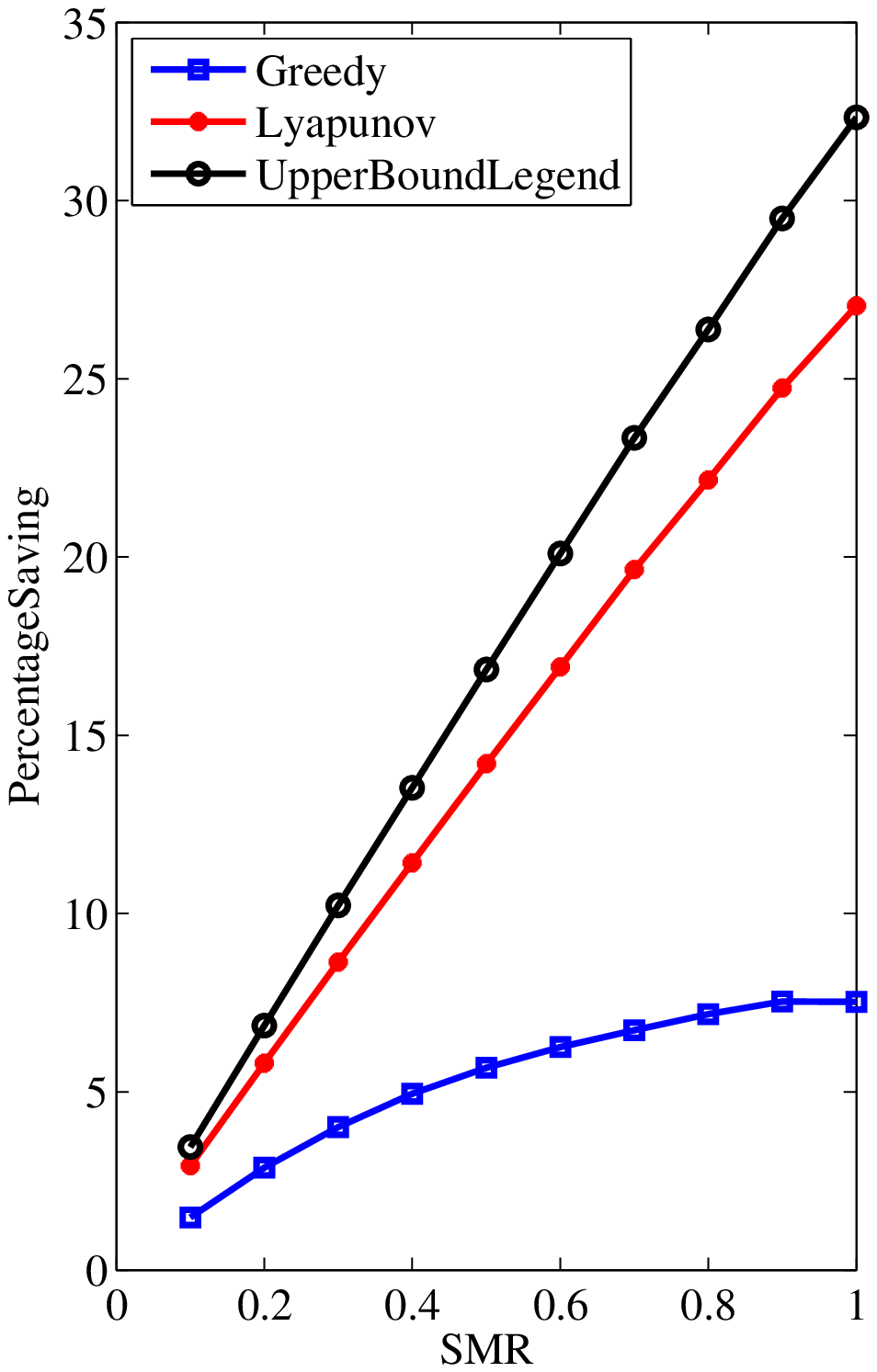}}
\subfigure[]{ \label{fig:result_Nstor_perc_b}
\centering
\psfrag{SMR}[cc][Bl]{Total $\bmax$ in network}
\psfrag{PercentageSaving}[cc][Bt]{Percentage cost savings (\%)}
\psfrag{NoStorage}{\scriptsize No storage}
\psfrag{Lyapunov}{\scriptsize Lyapunov}
\psfrag{Greedy}{\scriptsize Greedy}
\psfrag{LowerBoundLegend}{\scriptsize Lower bound}
\psfrag{UpperBoundLegend}{\scriptsize Upper bound}
\includegraphics[width=.33\textwidth]{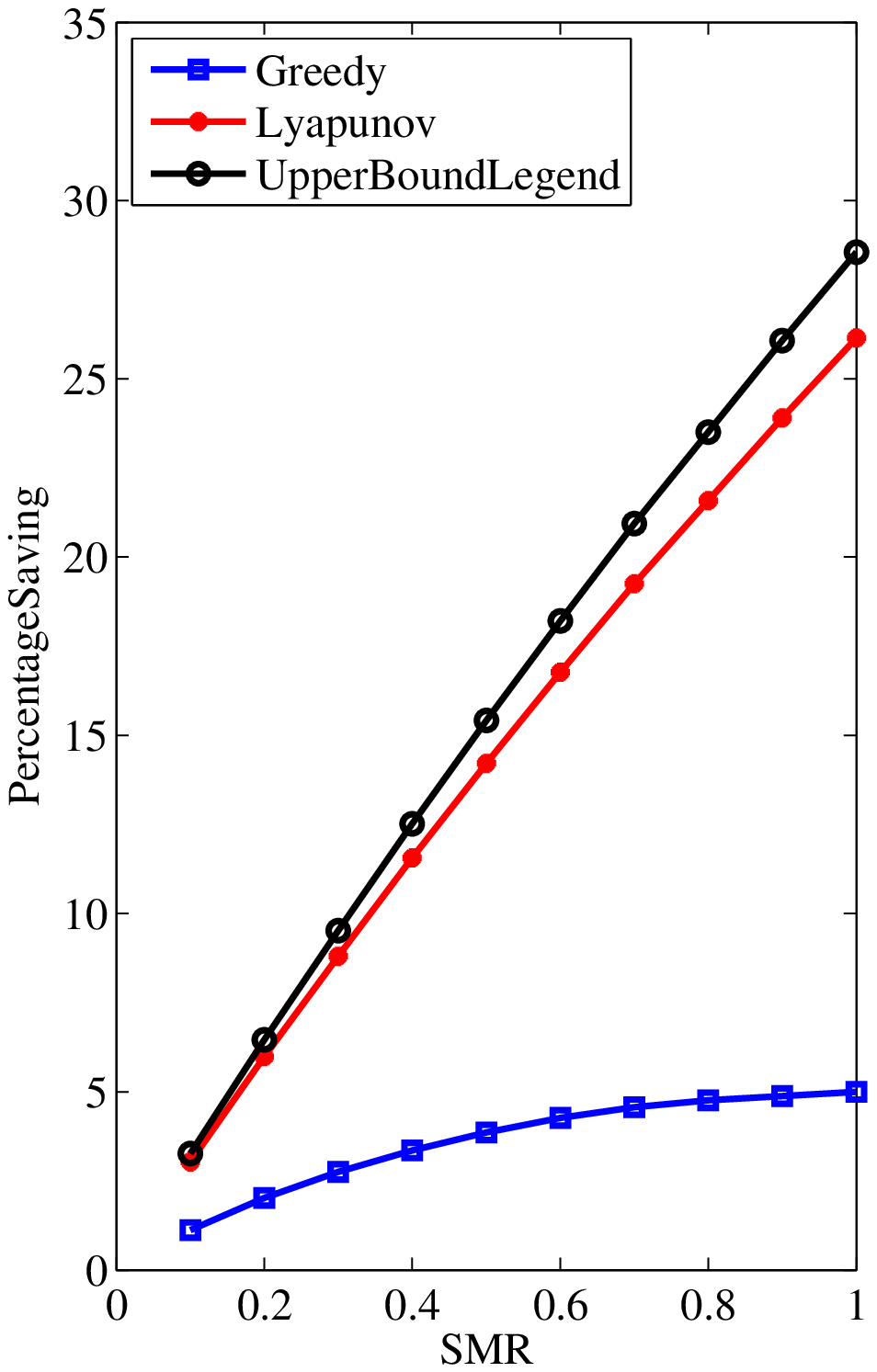}}
\caption{Percentage cost savings of a storage network operated for balancing. }\label{fig:5}
\end{figure}

\section{Conclusions and Future Work}
This work is motivated by the fundamental question of how to {\it optimally} shift energy over space and time to achieve uncertainty reduction and to facilitate renewable integration. To this end, we consider the problem of optimal control of generalized storage networks under uncertainty. The notion of generalized storage is proposed as a dynamic model to capture many forms of storage conveniently. An online control strategy is then proposed to solve the corresponding constrained stochastic control problem, whose performance is analyzed in detail. We prove that the algorithm is near optimal, and its sub-optimality can be bounded by a constant that only depends on the parameters of the storage network and cost functions, and is independent of the realizations of uncertainties. 

Although we have provided analysis for a relatively general setting, potential improvements can be achieved in many directions. (i) Our formulation starts by minimizing the long run  {\it expected average}  cost, and lands on an online algorithm that has {\it robust} performance guarantees in the form of a sub-optimality bound that holds for all uncertainty realizations. Relaxing such requirements may result in approaches that trade {\it risk} with {\it performance}. Better performance guarantees (in terms of smaller sub-optimality) may hold with large probability (instead of with probability one), which leads to, in a sense,  probably approximately correct (PAC) algorithms\cite{strehl2006pac}. (ii) While our online control solves deterministic optimization respecting network constraints, the sub-optimality bound is derived independent of network topology and network parameters such as line capacities. Utilizing such information may lead to a tighter performance bound or a more informed choice of algorithmic parameters. (iii) It can be an advantage or a disadvantage that our online algorithm does not use any statistical information about the uncertain parameters, depending on whether such information is readily available. Observe that our approach in fact can be generalized immediately to settings with additional {\it same-stage variables} which do not affect the (temporal) states of the system.  
Incorporating statistical information and forecast updates may improve the performance of the algorithm, and make the algorithm applicable to other settings where {\it lookahead variables} (such as wind farm contract level for the next stage) are considered together with storage operation. (iv) While the focus of this paper is on energy networks, the algorithm may be applied to other networks since 
our analysis does not rely on properties of the given constraints on network flow. \rev{This also implies that a more accurate AC power model can be used in this study as long as the online optimization can be solved efficiently. Recent advances in tight convex relaxation of AC optimal power flow \cite{BT2013} can be utilized for such purposes. (v) This paper provides a procedure to convert the hard stochastic problem to a sequence of easy deterministic problems which fit into today's grid operational paradigms (especially for transmission grids operated by centralized system operators). 
For the future, the integration of distributed energy resources would require a  decentralized solution to these problems. We note that many methods have been developed for distributed/decentralized {\it deterministic} optimization (\emph{cf.} \cite{BoydAdMM}); incorporating these methods for solving the online problems is an important future direction.}

\section*{Acknowledgments}
This work was supported in part by TomKat Center for Sustainable Energy,  in part by a Satre Family Fellowship, and in part by a Powell Foundation Fellowship. The authors wish to thank Walter Murray for useful comments.

\appendix
\section{Flow Representation of DC Power Flow}\label{sec:DCflow}
To lighten the notation, we drop the dependence of the network parameters on time $t$ as the discussion here applies to any time period.
Let the incidence matrix $\IM \in \real^{m \times n}$ of the network $G(\V,\E)$ be defined by 
\begin{equation}
\IM_{e,i}= 
\begin{cases}
1 & \mbox { if } e = (i, v) \in \E \mbox{ for some }v\in \V, \\
-1 & \mbox{ if } e = (v, i) \in \E \mbox{ for some }v\in \V,\\
0 & \mbox{ otherwise}.
\end{cases}
\end{equation}
Let $\be_{e} >0$ be the admittance of the line $e\in \E$, and $\ben\in \real^m$ be the vector $\{\be_e\}_{e\in \mathcal{E}}$.
Then matrix  $\IM$ is related to the $Y$-bus matrix such that $Y = \IM^\intercal \diag(\ben) \IM$, where $\diag(\ben)$ is the diagonal matrix with $\be_e$, $e\in \mathcal{E}$ on its diagonal. Under DC power flow assumptions, the line flows are linearly related to bus phase angles. Let $\thet_v$ be the phase angle on bus $v$. Then it is easy to see that
\begin{equation}\label{eq:ftheta}
\fn = \diag(\ben) \IM \thetn,
\end{equation}
and the bus flow injection are given by $\IM^\intercal f$. Equation \eqref{eq:ftheta} states that $f$ lives in the range of matrix $H = \diag(\ben) \IM$, \ie, $\fn \in \mathcal{R}(H)$. By the fact that the graph is connected,  $m \ge n-1$. Provided that $\rank(\IM) = n-1$, and $\diag(\ben)$ is full rank, we have $\rank(H) = n-1$. Then $\dim(\mathcal{R}(H)) = n-1$ and thus the dimension of the nullspace of $H^\intercal$ is $m-n+1$, \ie, $\dim (\mathcal{N}(H^\intercal)) = m-n+1$. Let the rows of $\K \in \real^{(m-n+1)\times m}$ be a basis for $\mathcal{N}(H^\intercal)$ (which can be obtained via \eg, singular value decomposition of $H$). Then $\mathcal{R}(K^\intercal) = \mathcal{N}(H^\intercal) \Rightarrow \mathcal{N}(\K) = \mathcal{R}(H)$, and therefore 
\[
\fn = H \thetn \Leftrightarrow \K\fn = 0.
\]

\section{Structural Properties of the Online Optimization}\label{appendix:convexCost}
We consider replacing the $\g(t)$ defined in \eqref{eq:singleBusGeneralCost} with an extended real-valued function 
\begin{equation}
\g(t) = \gc_t(\u(t),\psiaux(t) ),
\end{equation}
where $\psiaux(t)$ is a vector of auxiliary parameters that captures both stochastic parameters and deterministic parameters, and it is supported on a compact set $\C$. 
Observe that this would make our analysis applicable to general cost functions.
Similar to discussions in Section~\ref{sec:proofSingleBus}, we are interested in solving the following optimization in each time period $t$ for $\u(t)$
\begin{subequations}\label{prob:cvx_opt_online}
\begin{align}
\minimize &\la \bs(t) \u(t) + \W \gc_t(\u(t),\psiaux(t) )\label{prob:opt_cvx_1}
\\
\st & \umin \le \u(t) \le \umax,\label{prob:opt_cvx_2}
\end{align}
\end{subequations}
after  observing the realization of $\psiaux(t)$. 

\begin{lemma}[Structural Properties of Single Bus Online Optimization]\label{lem:ana_sol}
For an extended real valued function $\gc_t(\u(t),\psiaux(t) )$, let $\hat \gc^{\psiaux} (\u) \defeq \gc_t(\u,\psiaux)$, where $\psiaux$ is equal to the observed value of $\psiaux(t)$. The following statements hold, regardless of the realizations of $\psiaux(t)$.

\begin{enumerate}
\item if $\la \bs(t) + \W \Dl \hat \gc^{\psiaux}(\u )  \geq 0$, then $\uhat(t) = \umin$;
\item if $\la \bs(t) + \W \Du \hat \gc^{\psiaux}(\u ) \leq 0$, then $\uhat(t) = \umax$.
\end{enumerate}
Here, 
\[
\begin{split}
&\Dl \hat \gc^{\psiaux}(\u ) \defeq \inf\left\{\xi \in \partial \hat \gc^{\psiaux}(\u )\middle|
\begin{array}{l}
 \u\in [\umin,\umax],\,\, \\
\psiaux\in \C
\end{array}
\right\},\\
&\Du \hat \gc^{\psiaux}(\u ) \defeq \sup\left\{\xi \in \partial \hat \gc^{\psiaux}(\u )\middle|
\begin{array}{l}
 \u\in [\umin,\umax],\,\, \\
\psiaux\in \C\end{array}
\right\}.
\end{split}
\]
and $\partial\hat \gc^{\psiaux}(\u )$ is the sub-differential of $\gc^{\psiaux}(\u )$\footnote{We say $\beta$ is a sub-derivative of an extended real-valued function $\gc$ at $\u_0\in\real$ if $\beta\cdot(\u-\u_0) \leq \gc(\u)-\gc(\u_0)$, for any $\u\in\real$, and denote the set of all such $\beta$, namely the sub-differential of $\gc$ at $\u_0$, by  $\partial\gc(\u_0)$.}.
\end{lemma}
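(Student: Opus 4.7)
The plan is to exploit the affine structure of the first term in the objective~\eqref{prob:opt_cvx_1} and the defining inequality of the sub-derivative applied to the second term. Write $J_t(\u) \defeq \la \bs(t) \u + \W \hat{\gc}^{\psiaux}(\u)$ for the objective of~\eqref{prob:cvx_opt_online} at time $t$, where $\psiaux$ is the realized value of $\psiaux(t)$. Because $\W > 0$ and the first summand is linear in $\u$, any sub-derivative $\xi$ of $\hat{\gc}^{\psiaux}$ at a candidate endpoint $\u_0 \in \{\umin, \umax\}$ will furnish, via the defining inequality of a sub-derivative, a global affine lower bound on $\hat{\gc}^{\psiaux}$, and hence on $J_t$, that is tight at $\u_0$. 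The hypothesis in each case turns out to be exactly what is needed to force this lower bound to be minimized over $[\umin, \umax]$ at the stipulated endpoint.

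For part~1, I would pick any $\xi_{\umin} \in \partial \hat{\gc}^{\psiaux}(\umin)$. The sub-derivative inequality from the footnote gives $\hat{\gc}^{\psiaux}(\u) - \hat{\gc}^{\psiaux}(\umin) \ge \xi_{\umin}(\u - \umin)$ for every $\u \in \real$. Multiplying by $\W > 0$ and adding $\la \bs(t)(\u - \umin)$ to both sides produces
\[
J_t(\u) - J_t(\umin) \ge (\la \bs(t) + \W \xi_{\umin})(\u - \umin).
\]
Now $\xi_{\umin}$ belongs to the collection over which the infimum defining $\Dl \hat{\gc}^{\psiaux}(\u)$ is taken (since $\umin \in [\umin, \umax]$ and $\psiaux \in \C$), whence $\xi_{\umin} \ge \Dl \hat{\gc}^{\psiaux}(\u)$. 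Combining this with the hypothesis $\la \bs(t) + \W \Dl \hat{\gc}^{\psiaux}(\u) \ge 0$ shows that the coefficient on the right-hand side is nonnegative, and since $\u - \umin \ge 0$ on the feasible set $[\umin, \umax]$ we conclude $J_t(\u) \ge J_t(\umin)$, so $\uhat(t) = \umin$.

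Part~2 proceeds by a symmetric argument. Choose any $\xi_{\umax} \in \partial \hat{\gc}^{\psiaux}(\umax)$, derive $J_t(\u) - J_t(\umax) \ge (\la \bs(t) + \W \xi_{\umax})(\u - \umax)$, and combine $\xi_{\umax} \le \Du \hat{\gc}^{\psiaux}(\u)$ with the hypothesis $\la \bs(t) + \W \Du \hat{\gc}^{\psiaux}(\u) \le 0$ to conclude that $\la \bs(t) + \W \xi_{\umax} \le 0$. Multiplied by the nonpositive quantity $\u - \umax$ this gives $J_t(\u) \ge J_t(\umax)$, so $\uhat(t) = \umax$.

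The one point I expect to need care with is ensuring that $\partial \hat{\gc}^{\psiaux}(\u_0)$ is nonempty at the endpoint $\u_0$ being used, which the lemma's sub-differential framework implicitly requires; for the piecewise-linear cost~\eqref{eq:singleBusGeneralCost} this is automatic, and more generally one can replace $\xi_{\u_0}$ by a one-sided derivative supplied by convexity of $\hat{\gc}^{\psiaux}$ on a neighbourhood of $[\umin, \umax]$. Any such one-sided derivative is again sandwiched between $\Dl \hat{\gc}^{\psiaux}(\u)$ and $\Du \hat{\gc}^{\psiaux}(\u)$ by their definitions as infimum and supremum over the admissible ranges of $\u$ and $\psiaux$, so the estimates close regardless. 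The argument is otherwise purely algebraic and uses no specific form of the cost, which is precisely what allows it to underpin Lemma~\ref{coro:tech_res} in the piecewise-linear single-bus case.
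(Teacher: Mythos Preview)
Your argument is correct and is essentially the same as the paper's: both use the sub-derivative inequality at the relevant endpoint and the hypothesis to show the affine lower bound on $J_t$ has the right sign, yielding optimality of $\umin$ or $\umax$. The only cosmetic difference is that the paper phrases the key step as ``$\partial J_t(\u)\subseteq(-\infty,0]$ (resp.\ $[0,\infty)$) for all $\u$'' and then specializes to the endpoint, whereas you go directly to a single sub-derivative at the endpoint; your remark about nonemptiness of $\partial\hat\gc^{\psiaux}$ at the endpoints is a point the paper leaves implicit.
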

\begin{proof}

To show the set of sufficient conditions for $\uhat(t)$ takes $\umax$ (or $\umin$), notice that the condition
\[
\lambda\tilde \b(t) \leq-\W\Du \hat \gc^{\psiaux}(\u ) 
\]
 implies $ \partial J^{\psiaux,\tilde\b}_t (\u)\subseteq(-\infty,0] $ for any $\u(t)$, $\psiaux(t)$, $\tilde\b (t)$. Thus, for every given $\u\in[\umin,\umax]$, if $\beta$ is a constant such that
 \[
 J^{\psiaux,\tilde\b}_t (v)-J^{\psiaux,\tilde\b}_t (\u)\geq \beta\cdot(v-\u),\,\, \forall v\in[\umin,\umax],
  \]
then the sub-differential condition implies that $\beta\leq 0$. Now, by substituting $\u=\umax$ in the above expression, one obtains $\beta\cdot(v-\u)\geq 0$ and $J^{\psiaux,\tilde\b}_t (v)\geq J^{\psiaux,\tilde\b}_t (\umax)$,  for all $v\in[\umin,\umax]$. Therefore, one concludes that $\u(t) = \umax$ attains an optimal solution in problem \eqref{prob:cvx_opt_online}.
 Similarly, the condition 
 \[
 \lambda\tilde \b(t) \geq-\W\Dl \hat \gc^{\psiaux}(\u ) 
 \]
 implies $  \partial J^{\psiaux,\tilde\b}_t (u(t))\subseteq[0,\infty) $. Based on analogous arguments, one concludes that $\u(t) = \umin$ attains an optimal solution in problem \eqref{prob:cvx_opt_online}.
\end{proof}

\section{Proof of Single Bus Results}\label{sec:app:1bus}
\noindent{\bf Proof of Theorem~\ref{thm:feas_single}}
We first validate that the intervals of $\ks$ and $\W$ are non-empty. Note that from Assumption \ref{assume:W_max}, $\Wmax> 0$, thus it remains to show $\ksmax \ge \ksmin$. 
Based on \eqref{eq:W_max}, $W\geq0$, and $\Du\g\geq \Dl\g$, one obtains
\[
\begin{split}
\W(\Du \g - \Dl \g)\leq& \lambda (\bmax-\bmin)-(\umax-(1-\lambda)\bmax)^+\\
&- {((1-\lambda)\bmin-\umin)^+}.
 \end{split}
\]
Re-arranging terms results in 
\[
\begin{split}
 &\left[\!-\W \Dl \g + \pos{\umax \!\!- (1-\la)\bmax}\right]\!\! - \la\bmax\\
 \leq& \left[\!-\W \Du \g - \pos{(1-\la)\bmin \!\!- \umin}\right]\!\! -\la\bmin
 \end{split}
\]
which further implies $\ksmax\geq \ksmin$.

We proceed to show that
\begin{equation}\label{exp:MI}
\bmin\leq  \b(t)\leq \bmax,
\end{equation}
for $t= 1,2,\dots$, when $\u^\star(\Pk{3})$ is implemented. 
The base case holds by assumption.
Let the inductive hypothesis be that (\ref{exp:MI}) holds at time $t$.
The storage level at $t+1$ is  then
$\b(t+1)=\lambda \b(t)+\uhat(t). $
We show  (\ref{exp:MI}) holds at $t+1$ by considering the following three cases.

\noindent{\bf Case 1.}  $- \W \Dl\g\leq\la\bs(t)\leq\la ( \bmax+\ks)$. \\
First, it is easy to verify that the above interval for $\la \bs(t)$ is non-empty using \eqref{eq:ineq_1} and $\ks \ge \ksmin$. Next, based on Lemma \ref{coro:tech_res}, one obtains  $\uhat(t)=\umin\le 0$ in this case. Therefore
\[
\b(t+1) = \la \b(t) + \umin \le  \la \bmax + \umin \le \bmax,
\]
where the last inequality follows from Assumption \ref{assume:feas}. On the other hand, 
\begin{align*}
\b(t+1)& = \la \b(t) + \umin \ge -\W\Dl \g  - \la \ks + \umin \\
\ge & -\W\Dl \g  - \la \ksmax + \umin \\
\ge  &  \pos{(1-\la)\bmin \!\!- \umin}\!+ \!\la \bmin + \umin
\ge    \bmin, 
\end{align*}
where the third line used $\Du \g \ge \Dl \g$. 

\noindent{\bf Case 2.}
 $\la( \bmin+\ks) \le  \la\bs(t)  \le - \W \Du\g$.  \\
The above interval for $\la \bs(t)$ is non-empty by \eqref{eq:ineq_2} and $\ks \le \ksmax$. Lemma \ref{coro:tech_res} implies  $\uhat(t)=\umax \ge 0$ in this case. 
Therefore, using Assumption \ref{assume:feas},
\[
\b(t+1) = \la \b(t) + \umax \ge \la \bmin + \umax \ge \bmin.
\]
On the other hand, 
\begin{align*}
\!\!\b(t+1) &= \la \b(t) + \umax \le -\W\Du g - \la \ks + \umax \\
\le &  -\W\Du g - \la \ksmin + \umax \\
\le &  \!-\! \pos{\umax \!\!- (1-\la)\bmax} \!\! +\! \la \bmax + \umax\!\! 
\le \bmax,
\end{align*}
where the third line again is by $\Du \g \ge \Dl \g$. 

\noindent{\bf Case 3.} $-\W \Du\g<\la\bs(t)<- \W \Dl\g$.  \\
By  $\umin \le \uhat(t)\le\umax$, one obtains
\begin{align*}
\b(t+1) &=  \la \b(t) + \uhat(t) \le \la \b(t) + \umax\\
< & - \W \Dl\g - \la \ks + \umax\\
\le &  - \W \Dl\g - \la \ksmin + \umax\\
\le & \! -\! \pos{\umax \!\!- (1-\la)\bmax}\! +\! \la \bmax + \umax\!\! \le \bmax.
\end{align*}
On the other hand,
\begin{align*}
\b(t+1) &=  \la \b(t) + \uhat(t) \ge \la \b(t) + \umin\\
> & - \W \Dl\g - \la \ks + \umin\\
\ge &  - \W \Dl\g - \la \ksmax + \umax\\
\ge  &  \pos{(1-\la)\bmin \!\!- \umin}\!+ \!\la \bmin + \umin
\ge    \bmin.
\end{align*}

Combining these three cases, and by mathematical induction, we conclude \eqref{exp:MI} holds for all $t = 1,2,\dots$. 
%

\vspace{.2in}
\noindent{\bf Proof of Theorem~\ref{thm:perf_lyap}}
Consider a quadratic Lyapunov function $\L(\b) = \b^2/2$.
Let the corresponding Lyapunov drift be
\[
\Ls( \bs(t)) = \expec\left[ \L(\bs(t+1)) -  \L(\bs(t)) \vert  \bs(t) \right].
\]
Recall that
$
\bs(t+1) = \b(t+1) + \ks = \la \bs(t)   + \u(t) + (1-\la)\ks,
$
and so
\begin{align}
\Ls( \bs(t))
& = \expec\big[ (1/2)(\u(t) + (1-\la)\ks)^2  -  (1/2) (1-\la^2)\bs(t)^2 \nn\\
                &\quad\quad\quad + \la \bs(t) \u(t) + \la (1-\la) \bs(t) \ks \vert  \bs(t) \big]\nn\\
& \le \Mone(\ks)    -  (1/2) (1-\la^2)\bs(t)^2\nn \\
& \quad+\expec\big[ \la \bs(t) \u(t) + \la (1-\la) \bs(t) \ks \vert  \bs(t) \big]\nn\\
& \le\Mone(\ks) + \expec\left[ \la \bs(t) (\u(t)+(1-\la)\ks)\vert \bs(t) \right].\label{eq:1busdrift}
\end{align}
It follows that, with arbitrary storage operation $\u(t)$, 
\begin{align}
& \Ls(\bs(t)) + \W \expec [\g(t)| \bs(t)] \label{eq:ME}\\
\le & \Mone(\ks) + \la (1-\la)\bs(t)\ks+  \expec\big[ \Jkt{3}{t}(\u(t)) | \bs(t)],\nn
\end{align}
%
where it is clear that minimizing the right hand side of the above inequality over $\u(t)$ is equivalent to minimizing the objective of \Pk{3}. Given that $\ustat(t)$, the disturbance-only stationary policy of \Pk{2}  described in Lemma~\ref{lem:stat}, is feasible for \Pk{3}, the above inequality implies
\begin{align}
& \Ls(\bs(t)) + \W \expec [\g(t)| \bs(t), \u(t) = \uhat(t)] \label{eq:ME}\\
\le & \Mone(\ks) + \la (1-\la)\bs(t)\ks+  \expec\big[ \Jkt{3}{t}^\star | \bs(t)] \nn \\
\le & \Mone(\ks) + \la (1-\la)\bs(t)\ks+  \expec\big[ \Jkt{3}{t}(\ustat(t)) | \bs(t)] \nn\\
\stackrel{(a)}{=} & \Mone(\ks) +\la\bs(t)\expec\left[ \ustat(t) +(1-\la)\ks \right]+\W\expec [\g(t)|\ustat(t)]\nn \\
\stackrel{(b)}{\le} & \M(\ks) + \W\expec[\g(t)|\ustat(t)]\stackrel{(c)}{\le}  \M(\ks) + \W \Jk{1}^\star.\nn
\end{align}
Here $(a)$ uses the fact that $\ustat(t)$ is induced by a disturbance-only stationary policy;  $(b)$ follows from inequalities
$ |\bs(t)|\leq \left(\max\left( (\bmax+\ks)^2,(\bmin+\ks)^2\right)\right)^{1/2} $ and
$\left|\expec\left[ \ustat(t)\right]+(1-\la)\ks\right|
\le (1-\la) (\max( (\bmax+\ks)^2,(\bmin+\ks)^2))^{1/2};
$
and $(c)$ used $\expec[\g(t)|\ustat(t)]  = \Jk{2}^\star$ in Lemma~\ref{lem:stat} and $\Jk{2}^\star \le \Jk{1}^\star$. Taking expectation over $\bs(t)$ on both sides gives\begin{align} 
&  \expec\left[ \L(\bs(t+1)) -  \L( \bs(t)) \right] + \W\expec \left[\g(t)|\u(t) = \uhat(t) \right]    \nn\\
  \leq & \M(\ks)+\W \Jk{1}^\star. \label{eq:diff_ly}
\end{align}
Summing expression \eqref{eq:diff_ly} over $t$ from $1$ to $T$, dividing both sides by $\W T$, and taking the limit $T\rightarrow \infty$, we obtain the performance bound in expression \eqref{eq:1busiidperf_bdd}.

\vspace{.2in}
\noindent{\bf Proof of Lemma~\ref{SDP_P3_PO}}
Based on the re-parametrization
$
\None=\Mone(\ks)/\W,\,\, \Ntwo=\Mtwo(\ks)/\W
$
and $\W> 0$, one can easily show that problem \textbf{P3-PO} has a same solution as the following optimization problem:
\begin{subequations}
\begin{align*}
\minimize \quad & \None+\la(1-\la)\Ntwo\\
\st \quad & \ksmin\le  \ks \le \ksmax,0 <  \W  \le \Wmax, \\
&2\None \W\geq  \left(\umin+(1-\la)\ks\right)^2,\\
&2\None \W\geq \left(\umax+(1-\la)\ks\right)^2,\\
&\Ntwo \W\geq  \left(\bmin+\ks\right)^2,\Ntwo \W \geq \left(\bmax+\ks\right)^2.
\end{align*}
\end{subequations}
The proof is completed by applying Schur complement on the last four constraints of the above optimization.

\section{Proof for Networked Systems}\label{sec:proofNetwork}
In this section, we generalize the analysis in Section \ref{sec:proofSingleBus} to the network case. First we have the following assumption on $\d_v(t)$ and $p_v(t,l)$, for $v\in\V$, $l\in\{1,\ldots,L\}$.
\begin{assumption}\label{assume:networkProof}
We assume in this section that at each vertex $v\in \V$, the imbalance process $\{\d_v(t): t\ge 1\}$ and the process $\{\p_v(t,\ell): t \ge 1\}$, $l\in\{1,\ldots,L\}$, follow Assumption \ref{assume:singleBusProof}. 
\end{assumption}
Similar to the single bus system case, we define optimization problem \Pk{1} and \Pk{2} for the networked system, where the state updates on storage levels and the constraints on storage operations are defined on every vertices $v\in\V$. Furthermore, the convex constraint of the network flow is also added to each problem. Also, we define the following vector notations for the networked storage levels, storage operations, shift parameters and shifted storage levels:
\[
\begin{split}
&\bn(t)\defeq\{\b_v(t)\}_{v\in\V},\, \un(t)\defeq\{\u_v(t)\}_{v\in\V},\,\ksn\defeq\{\ks_v(t)\}_{v\in\V},\\
& \tilde\bn(t)=\bn(t)+\ksn.
\end{split}
\]
From the theory of stochastic network optimization \cite{NeelyBook}, the following result holds.
\begin{lemma}
[Optimality of Stationary Disturbance-Only Policies]\label{lem:stat_network}
Under Assumption~\ref{assume:networkProof}
there exists a stationary disturbance-only policy $(\ustatn(t),\fstatn(t))$ satisfying the constraints in \Pk{2}, providing the following guarantees for all $t$:
\begin{align}
  & (1-\la_v)\bmin_v\leq \expec[\ustat_v(t) ] \leq (1-\la_v)\bmax_v, \forall v\in\V\nn \\
  & \expec\left[\sum_{v\in \V}\gstat_v(t)\right]  = \Jk{2}^\star,\nn\end{align}
where the expectation is taken over the randomization of $\d_v(t)$, $\p_v(t,\ell)$, $\ell =1, \dots, L$, $\u_v(t)$, $\f_e(t)$ and 
\begin{equation*}\label{eq:networkGeneralCost_stat}
\begin{split}
&\gstat_v(t) \!=\! \sum_{\ell = 1}^{L_v} \p_v(t, \ell) \Big(\!\alI_v(\ell) \d_v(t) \!-\! \alC_v(\ell) \hC_v\left(\pos{\ustat_v(t)}\!\right) \\
&\!+\! \alD_v(\ell) \hD_v\left(\!\neg{\ustat_v(t)}\!\right)\! +\! \alF_v(\ell)\! \sum_{e\sim v} \!\fstat_e (t)\!+\! \alConst_v(t,\ell)\!\Big)^+. \nn
\end{split}
\end{equation*}
\end{lemma}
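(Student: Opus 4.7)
The plan is to mirror the proof of the single-bus analog (Lemma~\ref{lem:stat}) and reduce the networked statement to a direct application of the standard optimality theorem for stationary randomized policies in stochastic network optimization (Theorem 4.5 of \cite{NeelyBook}). The core reason this works is that \Pk{2} is state-independent: the storage dynamics \eqref{prob:networkGeneral:storUpdate} and the level bounds $\bmin_v\le \b_v(t)\le \bmax_v$ have been removed, so the only inter-temporal coupling is the long-run average constraint \eqref{P2:ubar}, whereas $\umin_v\le \u_v(t)\le \umax_v$ and $\fn(t)\in\F$ are per-stage constraints. Under Assumption~\ref{assume:networkProof}, $\thv(t)$ is i.i.d. with compact support, so the problem fits the opportunistic-scheduling framework exactly.

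First I would characterize the per-stage achievable region. For any (possibly randomized) map $\pi:\omega\mapsto(\un(\omega),\fn(\omega))$ from the disturbance realization $\omega\in\mathrm{supp}(\thv)$ into $\prod_{v\in\V}[\umin_v,\umax_v]\times\F$, define
\[
G(\pi)=\expec_{\thv}\!\left[\textstyle\sum_{v\in\V}\g_v\bigl(\thv,\un(\thv),\fn(\thv)\bigr)\right],\qquad \bar u_v(\pi)=\expec_{\thv}[\u_v(\thv)].
\]
The set $\bigl\{(G(\pi),\{\bar u_v(\pi)\}_{v\in\V}):\pi\text{ measurable}\bigr\}$ is convex, by time-sharing (randomize between two policies), and it is closed, by compactness of the per-stage action set $\prod_v[\umin_v,\umax_v]\times\F$ together with the compact support of $\thv$ and the continuity of $\g_v$ in its non-stochastic arguments. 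A standard measurable-selection argument then guarantees that the infimum
\[
\inf\bigl\{G(\pi):(1-\la_v)\bmin_v\le\bar u_v(\pi)\le(1-\la_v)\bmax_v,\ \forall v\in\V\bigr\}
\]
is attained by some stationary randomized policy $\pi^\star$.

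Next I would use $\pi^\star$ to define $(\ustatn(t),\fstatn(t))\defeq\pi^\star(\thv(t))$ for every $t$. Feasibility of \eqref{P2:ubounds} and $\fstatn(t)\in\F$ is immediate from the construction of $\pi^\star$, and the bound $(1-\la_v)\bmin_v\le \expec[\ustat_v(t)]\le(1-\la_v)\bmax_v$ follows because, by i.i.d.~$\thv(t)$ and stationarity of $\pi^\star$, $\expec[\ustat_v(t)]=\bar u_v(\pi^\star)$ is constant in $t$. The same stationarity argument makes $\expec[\sum_v\gstat_v(t)]$ independent of $t$, hence equal to its long-run average $G(\pi^\star)$. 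Finally, to identify $G(\pi^\star)=\Jk{2}^\star$ I would argue that every causal policy feasible for \Pk{2} can be approximated in time-average performance by some stationary randomized policy (a time-averaging argument: average the per-stage distributions over a long horizon and extract a weak limit supported on the compact action set), so the stationary class is optimal.

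The main obstacle is the existence of the optimal measurable selector, and more subtly the reduction from general causal policies to stationary randomized ones: this is where the compactness assumptions on the action set and the support of $\thv(t)$ are essential. Once that is in place, the passage from the single-bus Lemma~\ref{lem:stat} to the networked Lemma~\ref{lem:stat_network} is cosmetic, since $\F$ is a compact convex set and the network flow $\fn(t)$ enters only through the per-stage cost $\g_v$ (no cross-stage coupling), so the convex/compactness structure exploited in \cite{NeelyBook} is preserved verbatim.
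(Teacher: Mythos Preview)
Your proposal is correct and follows the same approach as the paper: the paper does not give an independent proof of Lemma~\ref{lem:stat_network} but simply invokes the stochastic network optimization framework of \cite{NeelyBook}, exactly as it does for the single-bus Lemma~\ref{lem:stat}. Your sketch is in fact more detailed than what the paper offers, since you spell out the achievable-region/time-sharing argument and the role of compactness, whereas the paper treats the result as a direct citation.
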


Recall the online optimization in expression \eqref{prob:P3_network}. By using $(\uhatn(t),\fhatn(t))$ to denote the minimizers at each time step: 
\[
\fhatn(t)\defeq\{\fhat_e(t)\}_{e\in\E},\, \uhatn(t)\defeq\{\uhat_v(t)\}_{v\in\V},
\]
$\un^\star(\Pk{3})$ and $\fn^\star(\Pk{3})$ to denote the sequence $\{\uhatn(t): t\ge 1\}$, $\{\fhatn(t): t\ge 1\}$ respectively and $\Jkt{3}{t}(\un(t),\fn(t))$ to denote the objective function of \Pk{3} at time period $t$. In the rest of this section, we will analyze feasibility conditions and performance bounds for the Lyapunov optimization algorithm in networked system. Before getting into the feasibility analysis, on each vertex $v\in \V$ we define the following bounds: $\ksmin_v$, $\ksmax_v$ and $\Wmax_v$, by their single bus system counterparts.
\begin{theorem}[Feasibility]
Suppose the initial storage level satisfies $\b_v(1) \in [\bmin_v, \bmax_v]$, for all $v\in\V$, then the storage level sequence $\{\bn(t): t\ge 1 \}$ induced by the sequence of storage operation $(\un^\star(\Pk{3}))$ is feasible with respect to storage level constraints, \ie, $\b_v(t) \in [\bmin_v, \bmax_v]$ for all $t$ and $v\in\V$, provided that
\[
\ksmin_v\le  \ks_v \le \ksmax_v,\,\,0 < \W_v  \le \Wmax_v, 
\]
for all $v\in \V$.
\end{theorem}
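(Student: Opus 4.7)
The plan is to reduce the networked feasibility claim to the single-bus Theorem~\ref{thm:feas_single} applied separately at each vertex. The key observation is that, although the online optimization \eqref{prob:P3_network} couples buses through $\fn(t)\in\F$ and through the dependence of $\g_v(t)$ on $\sum_{e\sim v}f_e(t)$, the variable $\u_v(t)$ itself appears only in the $v$-th summand of the objective and in the box constraint $\umin_v\le\u_v(t)\le\umax_v$. Hence, given any optimal flow $\fn^\star(t)\in\F$ selected by the algorithm, the component $\uhat_v(t)$ also solves the scalar subproblem
\begin{equation*}
\min_{\u_v\in[\umin_v,\umax_v]}\ \la_v(\b_v(t)+\ks_v)\u_v+\W_v\,\g_v(t;\u_v,\fn^\star(t)),
\end{equation*}
which, after scaling by $\W_v>0$, has exactly the form of \Pk{3}. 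Lemma~\ref{lem:ana_sol} then applies vertex-by-vertex, with the auxiliary parameter $\psiaux_v$ enlarged to include the realized flow $\fn^\star(t)$.

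Next, I would take $\Dl\g_v$ and $\Du\g_v$ to be the greatest lower bound and least upper bound of $\partial_{\u_v(t)}\g_v(t)$ over all admissible tuples $(\u_v,\d_v,\p_v,\fn\in\F)$, mirroring the definitions in Lemma~\ref{coro:tech_res}. Because these are infima/suprema over every admissible flow, the pin-down conditions become uniform in the algorithm's flow choice: whenever $\la_v\bs_v(t)\ge-\W_v\Dl\g_v$ the subproblem forces $\uhat_v(t)=\umin_v$, and whenever $\la_v\bs_v(t)\le-\W_v\Du\g_v$ it forces $\uhat_v(t)=\umax_v$. This gives the per-vertex structural characterization needed as the networked analog of Lemma~\ref{coro:tech_res}.

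With that in hand, I would transfer the induction argument of Theorem~\ref{thm:feas_single} verbatim to each vertex $v\in\V$. Starting from $\b_v(1)\in[\bmin_v,\bmax_v]$ and assuming $\b_v(t)\in[\bmin_v,\bmax_v]$, split on the value of $\la_v\bs_v(t)$ into the same three regimes. In the two extreme regimes, the structural result pins $\uhat_v(t)$ to $\umin_v$ or $\umax_v$ and the arithmetic showing $\b_v(t+1)\in[\bmin_v,\bmax_v]$ involves only the per-vertex constants together with Assumption~\ref{assume:feas} at $v$; these are exactly Cases~1 and~2 of the single-bus proof, with $\ksmin_v$, $\ksmax_v$ and $\Wmax_v$ playing their customary roles. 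The middle regime uses only $\uhat_v(t)\in[\umin_v,\umax_v]$, which holds by the box constraint, again exactly as in Case~3.

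The main subtlety, and the reason the reduction is not entirely trivial, is that in the middle regime $\uhat_v(t)$ genuinely depends on the coupled flow $\fn^\star(t)$, so one cannot literally claim that the per-vertex subproblems decouple. The resolution is that the middle regime only ever needs $\uhat_v(t)\in[\umin_v,\umax_v]$, not its precise value, while in the two extreme regimes the uniform sub-derivative bounds $\Dl\g_v$ and $\Du\g_v$ render the minimizer insensitive to the flow. Since the per-vertex hypotheses $\ksmin_v\le\ks_v\le\ksmax_v$ and $0<\W_v\le\Wmax_v$ are assumed, each vertex's induction closes independently, and $\b_v(t)\in[\bmin_v,\bmax_v]$ for all $t$ and all $v\in\V$, completing the proof.
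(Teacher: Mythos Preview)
Your proposal is correct and follows essentially the same approach as the paper: both reduce the networked feasibility to the single-bus Theorem~\ref{thm:feas_single} applied at each vertex, exploiting that $\Dl\g_v$ and $\Du\g_v$ are defined as bounds over all admissible flows $\fn\in\F$ so the extreme-regime pin-down is uniform in the algorithm's flow choice. Your write-up is in fact more explicit about the decoupling of $\u_v$ in the objective and about why the middle regime needs only the box constraint, but the substance matches the paper's proof.
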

\begin{proof}
Based on analogous arguments in Theorem \ref{thm:feas_single}, one can easily validate that the intervals of $\ks_v$ and $\W_v$, $\forall v\in\V$, are non-empty, noting that  
\[
\begin{split}
\W_v(\Du \g_v - \Dl \g_v)\leq& \la_v (\bmax_v-\bmin_v)\!-\!(\umax_v-(1\!-\!\la_v)\bmax_v)^+\\
&- {((1-\la_v)\bmin_v-\umin_v)^+},\,\,\forall v\in\V.
 \end{split}
\]
For the feasibility argument on $\b_v(t)$, $\forall v\in\V$, when $\fn(t)$ is any fixed quantity, one can show $\b_v(t) \in [\bmin_v, \bmax_v]$ by applying Theorem \ref{thm:feas_single} to each vertex $v\in\V$. Since this feasibility result is uniform in variable $\fn(t)$ (both $\Du \g_v$ and $\Dl \g_v$ are independent of $\fn(t)$, and $\fn(t)$ does not explicitly affect the storage level dynamics), the proof is completed by substituting $\fn(t)=\fhatn(t)$, $\forall t$.
\end{proof}
The following theorem provides a performance bound for Lyapunov optimization in networked system. On each vertex $v\in\V$, we also define the following quantities: $\Mone_v(\ks_v)$, $\Mtwo_v(\ks_v)$ by their single bus system counterparts.
\begin{theorem} [Performance]\label{thm:perf_lyap}
The sub-optimality of control sequence $(\un^\star(\Pk{3}),\fn^\star(\Pk{3}))$ is bounded by $\M(\ksn)/\W$, that is
\begin{equation*}\label{eq:networkiidperf_bdd}
\Jk{1}^\star\! \le\!  \Jk{1}(\un^\star(\Pk{3}),\fn^\star(\Pk{3})) \!\le\! \Jk{1}^\star + \sum_{v\in\V}\frac{\M_v(\ks_v)}{\W_v},
\end{equation*}
where
\[
\M_v(\ks_v)=\Mone_v(\ks_v)+\la_v(1-\la_v)\Mtwo_v(\ks_v).
\]

\end{theorem}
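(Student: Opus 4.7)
The plan is to lift the single-bus drift-plus-penalty argument of Theorem \ref{thm:perf_lyap} (the single-bus version) to the network by exploiting separability of the Lyapunov function and of the storage dynamics across buses, while handling the coupling among buses (through the flow variables $\fn(t)$) only at the stage where we invoke the optimality of the online optimization \Pk{3}.

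First I would introduce a separable quadratic Lyapunov function $\L(\tilde{\bn}(t)) = \sum_{v\in \V} \tilde{\b}_v(t)^2/2$ and define per-bus weighted drifts $\Ls_v(\tilde{\b}_v(t)) = \expec[\L_v(\tilde{\b}_v(t+1)) - \L_v(\tilde{\b}_v(t)) \mid \tilde{\b}_v(t)]$. Since the storage update $\tilde{\b}_v(t+1) = \la_v \tilde{\b}_v(t) + \u_v(t) + (1-\la_v)\ks_v$ involves only bus-$v$ quantities, the single-bus computation that produced \eqref{eq:1busdrift} applies verbatim at each $v$, yielding
\[
\Ls_v(\tilde{\b}_v(t)) \le \Mone_v(\ks_v) + \expec\!\left[\la_v \tilde{\b}_v(t)\bigl(\u_v(t)+(1-\la_v)\ks_v\bigr)\,\middle|\,\tilde{\b}_v(t)\right].
\]
Dividing by $\W_v$, summing over $v$, and adding $\sum_v \expec[\g_v(t)\mid \tilde{\bn}(t)]$ to both sides produces a ``drift-plus-penalty'' inequality whose right-hand side is, up to a constant shift $\sum_v \Mone_v(\ks_v)/\W_v + \sum_v \la_v(1-\la_v)\tilde{\b}_v(t)\ks_v/\W_v$, exactly the expected objective of the online optimization \Pk{3} in \eqref{prob:P3_network} (divided by the $\W_v$'s).

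Next, since $(\uhatn(t),\fhatn(t))$ is by construction a minimizer of \Pk{3} over the joint feasible set (individual storage-operation boxes plus the network-flow set $\F$), I can upper-bound the drift-plus-penalty at $(\uhatn(t),\fhatn(t))$ by its value at any other joint feasible pair. The natural choice is the stationary disturbance-only policy $(\ustatn(t),\fstatn(t))$ provided by Lemma \ref{lem:stat_network}: it satisfies the same storage operation bounds \eqref{prob:networkGeneral:u} and network flow constraint $\fstatn(t)\in\F$, so it is feasible for \Pk{3}. Because the policy is disturbance-only, $\expec[\ustat_v(t)]$ is constant in $t$ and lies in $[(1-\la_v)\bmin_v,(1-\la_v)\bmax_v]$, and $\expec[\sum_v \gstat_v(t)] = \Jk{2}^\star \le \Jk{1}^\star$. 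Substituting and absorbing the cross terms exactly as in the single-bus proof --- bounding $|\tilde{\b}_v(t)| \le \bigl(\max((\bmax_v+\ks_v)^2,(\bmin_v+\ks_v)^2)\bigr)^{1/2}$ and bounding $|\expec[\ustat_v(t)]+(1-\la_v)\ks_v|$ by the same square-root quantity times $(1-\la_v)$ --- yields, per bus $v$, a constant bound $\M_v(\ks_v) = \Mone_v(\ks_v) + \la_v(1-\la_v)\Mtwo_v(\ks_v)$. Taking an outer expectation over $\tilde{\bn}(t)$, summing over $t=1,\dots,T$, dividing by $T$, and letting $T\to\infty$ (the boundary term from telescoping $\expec[\L(\tilde{\bn}(T+1))-\L(\tilde{\bn}(1))]$ vanishes because $\tilde{\bn}$ remains in a bounded set by the feasibility theorem) produces the claimed bound
\[
\Jk{1}(\un^\star(\Pk{3}),\fn^\star(\Pk{3})) \le \Jk{1}^\star + \sum_{v\in\V}\frac{\M_v(\ks_v)}{\W_v}.
\]

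The only genuinely new feature compared to the single-bus case is the coupling through $\fn(t)\in \F$, and I expect that to be the conceptual (rather than technical) crux: I must use \Pk{3}'s optimality \emph{jointly} over $(\un(t),\fn(t))$, not just over $\un(t)$ with a fixed flow, and I must justify that the stationary policy from Lemma \ref{lem:stat_network} is a valid comparator precisely because it respects $\F$. Once that point is properly articulated, the rest of the calculation reduces to node-wise bookkeeping identical to the single-bus proof, since the Lyapunov drift term in the \Pk{3} objective is separable across buses and the flow variable does not enter the storage dynamics.
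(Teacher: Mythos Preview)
Your proposal is correct and follows essentially the same approach as the paper: a separable quadratic Lyapunov function, per-bus drift bounds inherited from the single-bus analysis, comparison of the joint drift-plus-penalty at $(\uhatn(t),\fhatn(t))$ against the stationary disturbance-only policy of Lemma~\ref{lem:stat_network}, and then telescoping and averaging. Your explicit remark that the flow coupling is handled solely at the \Pk{3}-optimality step (and that the comparator is admissible because $\fstatn(t)\in\F$) is exactly the point the paper relies on, and your per-bus division by $\W_v$ before summing is in fact the cleaner way to recover the \Pk{3} objective \eqref{prob:P3_network} and the bound $\sum_v \M_v(\ks_v)/\W_v$.
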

\begin{proof}
Consider a quadratic Lyapunov function $\L_v(\b_v) = \b_v^2/2$.
Let the corresponding Lyapunov drift be
\[
\Ls_v( \b_v(t)) = \expec\left[ \L_v(\b_v(t+1)) -  \L_v(\b_v(t)) \vert  \b_v(t) \right].
\]
Similar to the analysis in expression \eqref{eq:1busdrift}, one obtains
\[
\Ls( \bsn(t)) \!\le\!\sum_{v\in\V}\! \Mone(\ksn) +\!\expec\!\left[ \la_v\! \bs_v(t) (\u_v(t)\!+\!(1\!-\!\la_v)\ks_v)\vert \bs_v(t) \right].
\]
It follows that, with arbitrary storage operation $\un(t)$ and network flow $\fn(t)$, 
\begin{align}
& \Ls(\bsn(t)) + \W \expec \left[\sum_{v\in \V} \g_v(t)| \bsn(t)\right] \le  \Mone(\ksn) +\nn\\
& \quad\sum_{v\in\V}\la_v (1-\la_v)\bs_v(t)\ks_v+  \expec\big[ \Jkt{3}{t}(\un(t),\fn(t)) | \bsn(t)],\nn
\end{align}
%
where it is clear that minimizing the right hand side of the above inequality over $(\un(t),\fn(t))$ is equivalent to minimizing the objective of \Pk{3}. Since $(\ustatn(t),\fstatn(t))$, the disturbance-only stationary policy of \Pk{2}  described in Lemma~\ref{lem:stat_network}, is feasible for \Pk{3}, similar to the analysis in expression \eqref{eq:ME}, the above inequality implies
\begin{align}
& \W \expec\! \left[\sum_{v\in \V}\! \g_v(t)| \bsn(t), \uhatn(t),\fhatn(t)\right]\! +\!\Ls(\bsn(t))\nn\\
 \le &  \Mone(\ksn) \!+\!\! \sum_{v\in\V}\la_v\! (1\!-\!\la_v)\bs_v(t)\ks_v\!+\!\expec\big[ \Jkt{3}{t}(\ustatn(t),\!\fstatn(t)) ]\nn\\
 \le &  \M(\ksn) + \W \Jk{1}^\star.\nn
\end{align}
Taking expectation over $\bsn(t)$ on both sides gives\begin{align} 
&\W\expec \left[\sum_{v\in\V}\g_v(t)| \uhatn(t),\fhatn(t)\right]+ \expec\left[ \L(\bsn(t+1)) -  \L( \bsn(t)) \right]\nn\\
  \leq& \M(\ksn)+\W \Jk{1}^\star. \label{eq:diff_ly_network}
\end{align}
Summing expression \eqref{eq:diff_ly_network} over $t$ from $1$ to $T$, dividing both sides by $\W T$, and taking the limit $T\rightarrow \infty$, we obtain the performance bound in expression \eqref{eq:networkiidperf_bdd}.
\end{proof}

The semidefinite program for minimizing the bound is as follows.
\begin{lemma}[Semidefinite Optimization of ${\M(\ksn)}/{\W}$]
Let symmetric positive definite matrices $\Xumin_v$, $\Xumax_v$, $\Xbmin_v$, and $\Xbmax_v$, $v\in\V$, be defined as follows
\begin{equation*}
\!\!\Xudot_v\!\! = \!\!\begin{bmatrix}
\None_v&\!\!\udot_v+(1-\la_v)\ks_v\\
*& 2\W
\end{bmatrix}, \,\,
\Xbdot_v\!\! = \!\!\begin{bmatrix}
\Ntwo_v&\!\!\bdot_v+\ks_v\\
*& \W
\end{bmatrix},\!\!
\end{equation*}
where $(\cdot)$ can be either $\max$ or $\min$, and $\None_v$ and $\Ntwo_v$ are auxiliary variables, for any $v\in\V$. Then the sub-optimality bound $\M(\ksn)/\W$ can be optimized by solving the following semidefinite program
\begin{subequations}\label{prob:sdp_network}
\begin{align}
\emph{\minimize} \quad & \sum_{v\in\V}\None_{v}+\la_v(1-\la_v)\Ntwo_v \\
\emph{\st} \quad & \ksmin_v\le  \ks_v \le \ksmax_v, \label{c1}\\
& 0 <  \W  \le \Wmax_v, \\
& \Xumin_v,\Xumax_v, \Xbmin_v, \Xbmax_v \succeq 0,\label{cend}
\end{align}
\end{subequations}
where constraints \eqref{c1}-\eqref{cend} hold for all $v\in \V$. 
\end{lemma}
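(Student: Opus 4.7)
The plan is to mirror the proof of the single-bus counterpart Lemma~\ref{SDP_P3_PO} and exploit the key structural observation that both the objective $\M(\ksn)/\W = \sum_{v\in\V} \M_v(\ks_v)/\W_v$ and the feasibility constraints on $(\ks_v, \W_v)$ are fully separable across vertices $v\in\V$. Therefore the networked bound-minimization problem is simply the sum of $|\V|$ independent copies of the single-bus parameter-optimization program \Pk{3}-{\bf PO}, and the semidefinite reformulation can be obtained by applying the single-bus reduction vertex-by-vertex.

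First I would reparametrize by introducing, for each $v\in\V$, auxiliary scalar variables $\None_v$ and $\Ntwo_v$ playing the role of $\Mone_v(\ks_v)/\W_v$ and $\Mtwo_v(\ks_v)/\W_v$. Since $\W_v > 0$ on the admissible set, the original minimization of $\sum_{v\in\V}[\Mone_v(\ks_v) + \la_v(1-\la_v)\Mtwo_v(\ks_v)]/\W_v$ is equivalent to minimizing $\sum_{v\in\V} \None_v + \la_v(1-\la_v)\Ntwo_v$ subject to
\begin{align*}
2\None_v \W_v &\ge (\umin_v + (1-\la_v)\ks_v)^2, \\
2\None_v \W_v &\ge (\umax_v + (1-\la_v)\ks_v)^2, \\
\Ntwo_v \W_v &\ge (\bmin_v + \ks_v)^2, \\
\Ntwo_v \W_v &\ge (\bmax_v + \ks_v)^2,
\end{align*}
together with $\ksmin_v \le \ks_v \le \ksmax_v$ and $0 < \W_v \le \Wmax_v$. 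The four inequalities per vertex arise because the $\max$ operators defining $\Mone_v$ and $\Mtwo_v$ are equivalently captured by requiring the auxiliary variable to dominate each individual argument separately.

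Next, since $\W_v > 0$, I would apply the Schur complement to each quadratic inequality. The constraint $2\None_v \W_v \ge (\udot_v + (1-\la_v)\ks_v)^2$ is equivalent to positive semidefiniteness of the $2\times 2$ matrix $\Xudot_v$ defined in the lemma statement; analogously, $\Ntwo_v \W_v \ge (\bdot_v + \ks_v)^2$ is equivalent to $\Xbdot_v \succeq 0$. Doing this for each $v\in\V$ and each of the four inequalities produces exactly the $4|\V|$ linear matrix inequalities appearing in \eqref{cend}, while the linear bounds on $\ks_v$ and $\W_v$ carry over verbatim. Summing the reparametrized per-vertex objectives yields the objective of \eqref{prob:sdp_network}.

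The main technical points, both of which are minor, are to verify: (i) strict positivity $\W_v > 0$ so that the Schur complement applies to the lower-right diagonal entry, which is guaranteed by the feasibility interval $(0, \Wmax_v]$ from the network feasibility theorem combined with Assumption~\ref{assume:W_max} vertex-wise; and (ii) non-emptiness of the admissible per-vertex set $[\ksmin_v,\ksmax_v] \times (0, \Wmax_v]$, which follows from the identical chain of inequalities used in the proof of Theorem~\ref{thm:feas_single}, now applied to each vertex of $\V$ independently. Beyond these bookkeeping checks there is no coupling across vertices: the problem factorizes into $|\V|$ independent single-bus SDP reformulations aggregated via the separable objective, so the entire argument is effectively a vertex-wise invocation of Lemma~\ref{SDP_P3_PO} followed by summation.
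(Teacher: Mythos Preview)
Your proposal is correct and follows exactly the approach the paper indicates: the paper's own proof simply states that it is similar to the proof of Lemma~\ref{SDP_P3_PO} and omits the details, and what you have written is precisely that single-bus argument (reparametrization $\None_v=\Mone_v(\ks_v)/\W_v$, $\Ntwo_v=\Mtwo_v(\ks_v)/\W_v$, replacement of the $\max$ by two inequalities, and Schur complement) carried out vertex-by-vertex and then summed. If anything, you have supplied more detail than the paper does, including the separability observation and the well-posedness checks.
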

\begin{proof}
The proof is similar to the proof of Lemma~\ref{SDP_P3_PO} and is omitted here. 
\end{proof}

\section{Generalization to Markov Cases}\label{appendix_markov_process}
Under Assumption \ref{assume:stoc}, $\thv(t)$ is some deterministic function of the system stochastic state $\omega(t)$, where $\omega(t)$ is a finite state ergodic Markov Chain, supported on $\Omega$. Let $\omega_0\in\Omega$ be the initial state of $\omega(t)$. Since $\omega(t)$ is an ergodic Markov chain, there exists a sequence of finite random return time $T_r$, for $r\in\mathbb N$, such that $\omega(T_r)$ revisits $\omega_0$ for the $r$-th time at time $T_r$. Define $\R(t)$ as the number of visits of $\omega_0$ at time $t$. Specifically, $\R(t)\defeq\max\{r:T_r\leq t\}$. From this sequence of return times, we define the $r-$th epoch as $[T_r,T_{r+1}-1]$ and the length of this epoch is defined as $\DT_r=T_{r+1}-T_r$. From the definition of a return time in a Markov chain, it is obvious that the sequence of $\{\DT_r\}_{r\in\mathbb N}$ is i.i.d.. Let  $\DT$ be a random variable with the (common) time distribution of $\DT_r$, $\forall r$. The positive recurrence assumption implies that $\expec[\DT]<\infty$. We assume that the second moment of $\DT$ is bounded: $\expec\left[(\DT)^2\right]<\infty$ and define the mean return rate of state $w_0$ as $\renewrate = {1}/{\expec[\DT]}$. 

 For the feasibility analysis, the result directly follows from Theorem \ref{thm:feas_single}, as \Pk{3} is a deterministic online optimization problem. Next, we turn to the performance analysis of the Lyapunov optimization algorithm. 
\begin{theorem} [Performance]\label{thm:perf_lyap_renew}
The sub-optimality of storage operation $\u^\star(\Pk{3})$ is bounded by $\M(\ks)/\W$ almost surely, that is
\begin{equation}\label{eq:perf_bdd_renewal}
\Jk{1}^\star \le  \Jk{1}(\u^\star(\Pk{3})) \le \Jk{1}^\star + \Mfour(\ks)/\W
\end{equation}
almost surely, where
\begin{align*}
&\Mfour(\ks)=\Mtwo(\ks)+\renewrate\expec\left [\Mthree(\ks,\DT)\right],\\
&\Mone(\ks) =\! \frac{1}{2}\! \max\left(\! \left(\umin\!+(1-\la)\ks\right)^2\!\!,\left(\umax\!+(1-\la)\ks\right)^2\! \right)\!,\\
&\Mtwo(\ks)= \la(1-\la)\max\left( \left(\bmin+\ks\right)^2,\left(\bmax+\ks\right)^2 \right),\\
&\Mthree(\ks,T)=\Mone(\ks)(2T^2+T),
\end{align*}
\end{theorem}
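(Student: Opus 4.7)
The plan is to mirror the i.i.d.\ proof of Theorem~\ref{thm:perf_lyap} in Section~\ref{sec:proofSingleBus}, but to perform the Lyapunov comparison over renewal frames (epochs) $[T_r,T_{r+1}-1]$ of the underlying Markov chain rather than slot by slot. I will take the same quadratic Lyapunov function $\L(\bs)=\bs^2/2$. Because \Pk{3} is a slot-wise deterministic optimization, the feasibility analysis carries over verbatim from Theorem~\ref{thm:feas_single}; only the performance bound needs new ideas.

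First I would analyze the frame drift $\expec[\L(\bs(T_{r+1}))-\L(\bs(T_r))\mid \bs(T_r)]$. To this end I iterate the dynamics \eqref{eq:storDyn} over one epoch and write $\bs(T_{r+1})$ as a linear combination of $\bs(T_r)$ (with coefficient $\la^{\DT_r}$) and a discounted sum of $\u(\tau)+(1-\la)\ks$ over $\tau\in[T_r,T_{r+1}-1]$. After squaring, the cross-term between $\bs(T_r)$ and the operation sum reproduces, modulo exponential discounts in $\la$, the same per-slot drift contribution $\la\bs(t)\u(t)+\la(1-\la)\bs(t)\ks$ that \Pk{3} actively minimizes; this recovers the $\Mtwo(\ks)$ contribution as in \eqref{eq:1busdrift}. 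The self-term of the operation sum is the genuinely new ingredient: bounding $|\u(\tau)+(1-\la)\ks|$ uniformly in terms of $\Mone(\ks)$ and applying Cauchy--Schwarz (or expanding the square directly) gives an upper bound of order $\Mone(\ks)(2\DT_r^2+\DT_r)$, which is exactly $\Mthree(\ks,\DT_r)$.

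Next I would invoke a Markov analogue of Lemma~\ref{lem:stat}: by positive recurrence and ergodicity of $\omega(t)$ on the finite set $\Omega$, there exists a stationary policy $\ustat(t)$ depending only on $\omega(t)$ that is feasible for \Pk{2} and that, in expectation, accumulates cost $\expec[\DT]\cdot\Jk{2}^\star$ over each epoch starting at $\omega_0$. Using $\ustat$ as a feasible comparator inside each slot's \Pk{3} and summing the slot-wise inequalities across the epoch yields a drift-plus-penalty inequality of the form
\[
\text{frame drift}\; +\; \W\cdot(\text{accumulated cost over epoch}) \;\le\; \Mtwo(\ks) + \expec[\Mthree(\ks,\DT)] + \W\,\expec[\DT]\,\Jk{1}^\star,
\]
after using $\Jk{2}^\star\le\Jk{1}^\star$ and collecting the discount factors.

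Finally I would telescope over the first $\R(T)$ epochs, drop the nonnegative residual $\L(\bs(T_{\R(T)+1}))$, divide through by $\W T$, and let $T\to\infty$. The renewal strong law $\R(T)/T\to\renewrate$ almost surely converts the per-epoch bound into a per-slot bound, producing the $\renewrate\,\expec[\Mthree(\ks,\DT)]$ term in $\Mfour(\ks)$ and establishing \eqref{eq:perf_bdd_renewal} almost surely. The main obstacle I anticipate is the careful bookkeeping in the first step: separating the per-slot contributions that \Pk{3} actually minimizes from the residual quadratic-in-$\DT_r$ terms, and then controlling the latter via the assumed finiteness of $\expec[\DT^2]$ (which is automatic here since $|\Omega|<\infty$ together with ergodicity). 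A minor secondary subtlety is that Theorem~\ref{thm:feas_single} gives pathwise, not just in-expectation, bounds on $\bs(t)$, and this is what ultimately permits the almost-sure rather than in-expectation guarantee in \eqref{eq:perf_bdd_renewal}.
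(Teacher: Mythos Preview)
Your overall renewal-frame plan matches the paper's, but the drift step has a real gap. When you write $\bs(T_{r+1})=\la^{\DT_r}\bs(T_r)+A$ with $A$ the discounted operation sum and then square, the cross term you obtain is $\la^{\DT_r}\bs(T_r)\,A$: it is linear in $\bs(T_r)$, not in the running queue state $\bs(\tau)$. However, the only comparison inequality the online solve gives you is the slot-wise one $\la\bs(\tau)\uhat(\tau)+\W\,g(\tau)|_{\uhat}\le \la\bs(\tau)\ustat(\tau)+\W\,g(\tau)|_{\ustat}$, which carries $\bs(\tau)$ in the linear term. Your cross term therefore does not match what \Pk{3} minimizes, and you cannot pass from $\uhat$ to $\ustat$ in it without an additional argument; the claim that the cross term ``reproduces the per-slot drift contribution that \Pk{3} actively minimizes'' is exactly the point that fails.

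The paper handles this step differently: it telescopes the \emph{single-slot} bound \eqref{eq:1busdrift} over $\tau\in[T_r,T_{r+1}-1]$, producing $\sum_\tau[\la\bs(\tau)\u(\tau)+\W\,g(\tau)]$ plus $\DT_r\,\Mone(\ks)$ and a $\sum_\tau\la(1-\la)\bs(\tau)\ks$ term. Each bracket is now a genuine \Pk{3} objective at slot $\tau$, so the comparison with $\ustat$ goes through slot by slot. Only \emph{after} substituting $\ustat$ does the paper replace $\bs(\tau)$ by $\bs(T_r)$, bounding $\sum_\tau|\bs(\tau)-\bs(T_r)|\le\sqrt{2\Mone(\ks)}\,\DT_r^2$ directly from the dynamics; multiplied by $|\ustat(\tau)+(1-\la)\ks|\le\sqrt{2\Mone(\ks)}$, this queue-deviation correction is the actual source of the $2\Mone(\ks)\,\DT_r^2$ part of $\Mthree$ (it is not a Cauchy--Schwarz bound on the self-term $A^2$ as you suggest). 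The remaining piece, $\la\bs(T_r)$ times the epoch sum of $\ustat(\tau)+(1-\la)\ks$, is what gets bounded by $\Mtwo(\ks)\,\expec[\DT]$ via the Renewal Cost Theorem; note in particular that $\Mtwo$ carries a factor $\expec[\DT]$ in the per-epoch inequality, which your displayed inequality omits --- as written, yours would yield $\renewrate\,\Mtwo$ rather than $\Mtwo$ in the final bound.
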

\begin{proof}
Similar to the case with i.i.d. assumptions, consider a quadratic Lyapunov function $\L(\b) = \b^2/2$. Let the corresponding Lyapunov drift be
\[
\Ls( \bs(t)) = \expec\left[ \L(\bs(t+1)) -  \L(\bs(t)) \vert  \bs(t) \right].
\]
Based on the analysis in expression (\ref{eq:1busdrift}), one obtains
\[
\Ls( \bs(t)) \le\Mone(\ks) + \expec\left[ \la \bs(t) (\u(t)+(1-\la)\ks)\vert \bs(t) \right].
\]
By substituting $t=T_r$, and by a telescoping sum, it follows that, with arbitrary storage operation $\u(t)$, 
\begin{equation*}
\begin{split}
&\expec\left[L(\tilde \b(T_{r+1})) \!-\! L(\tilde \b(T_r))\!+\! \W\sum_{\tau=T_r}^{T_{r+1}-1}\!\g(\tau)\vert \tilde \b(T_r)\!\right] \\
 \leq& \expec\left[ \sum_{\tau=T_r}^{T_{r+1}-1}\la \bs(\tau) \u(\tau)\! +\! \W\g(\tau)\vert \tilde \b(T_r)\right]\!+\!\\
 &\Mone(\ks)\expec[\DT_r\vert \tilde \b(T_r)]\!+\!\expec\left[\sum_{\tau=T_r}^{T_{r+1}-1}\la \bs(\tau)(1-\la)\ks)\vert\bs(T_r)\right].
\end{split}
\end{equation*}
It is clear that minimizing the right hand side of the above expression over $\u(\tau)$ is equivalent to minimizing the objective of \Pk{3}. Given that $\ustat(\tau)$, the disturbance-only stationary policy of \Pk{2}  described in Lemma~\ref{lem:stat}, is feasible for \Pk{3}. Thus, this expression implies
\begin{align}
&  \expec \left[\sum_{\tau=T_r}^{T_{r+1}-1}\Ls( \bs(\tau))+\W\g(\tau)| \bs(T_r), \u(\tau) = \uhat(\tau),\,\forall \tau\right]\nn\\
\le &  \expec\left[\sum_{\tau=T_r}^{T_{r+1}-1}\Mone(\ks) + \la (1-\la)\bs(\tau)\ks+\Jkt{3}{\tau}\vert\bs(T_r)\right]\nn\\
\le &  \expec\left[\!\sum_{\tau=T_r}^{T_{r+1}-1} \Jkt{3}{\tau}(\ustat(\tau)) | \bs(T_r)\right]\nn \\
&+\expec\left[\sum_{\tau=T_r}^{T_{r+1}-1}\Mone(\ks) + \la (1-\la)\bs(\tau)\ks\vert\bs(T_r)\right]\nn\\
=&\W\expec\left [\sum_{\tau=T_r}^{T_{r+1}-1}\gstat(\tau)\right]\nn\\
&+\expec\left[\sum_{\tau=T_r}^{T_{r+1}-1}\!\! \!\Mone(\ks) +\la\bs(\tau) (\ustat(\tau) +(1-\la)\ks)\vert\bs(T_r)\! \right]\label{eq:opt_renewal_1}
\end{align}
where the first inequality follows from the definition of $\Jkt{3}{\tau}$ and the first equality follows from the facts that $\{\ustat(\tau)\}_{T_r}^{T_{r+1}-1}$ is a sequence of disturbance-only control polices and each epoch: $[T_r,T_{r+1}-1]$, $\forall r$, is independent and identically distributed. Based on the definitions of $\tilde \b(\tau)$ and $\Mone(\ks)$, one obtains $|(\ustat(\tau)+(1-\la)\ks) |\leq\sqrt{2\Mone(\ks)}$, $\forall \tau$ and
\[
\begin{split}
\sum_{\tau=T_r}^{T_{r+1}-1}\!\!\!|( \bs(\tau)-&\bs(T_r))|\!
\leq \!\!\!\!\sum_{\tau=T_r}^{T_{r+1}-1}\!\left|\sum_{j=T_r}^{\tau-1}\!\!\la^{\tau-j-1} (\u(j)\!+\!(1\!-\!\la)\ks)\right|\\
\leq&\sum_{\tau=T_r}^{T_{r+1}-1}\sum_{j=T_r}^{\tau-1} |(u(j)+(1-\la)\ks)|\\
\leq&\sqrt{2\Mone(\ks)} \sum_{\tau=T_r}^{T_{r+1}-1}\DT_r \leq \sqrt{2\Mone(\ks)}\DT_r^2.
\end{split}
\]
By the Renewal Cost Theorem (Theorem 3.6.1, \cite{ross_stochastic}), with $\{\DT_r,\sum_{\tau=T_r}^{T_{r+1}-1}\gstat(\tau)\}$ and $\{\DT_r,\sum_{\tau=T_r}^{T_{r+1}-1}\ustat(\tau)\}$, $\forall r$, forming two i.i.d. processes, one obtains 
\[
\begin{split}
\renewrate\expec\left[ \sum_{\tau=T_r}^{T_{r+1}-1}\gstat(\tau)\right]\!=\!&\lim_{t\to \infty} \frac{1}{t} \expec \left[ \sum_{r=0}^{\R(t)-1}\sum_{\tau=T_r}^{T_{r+1}-1}\gstat(\tau)\right]\\
\!=\!&\lim_{t\to \infty} \frac{1}{t} \expec \left[\sum_{\tau=1}^{t} \gstat(\tau)\right],\\
\renewrate\expec\left[ \sum_{\tau=T_r}^{T_{r+1}-1}\ustat(\tau)\right]\!=\!&\lim_{t\to \infty} \frac{1}{t} \expec \left[ \sum_{r=0}^{\R(t)-1}\sum_{\tau=T_r}^{T_{r+1}-1}\ustat(\tau)\right]\\
\!=\!&\lim_{t\to \infty} \frac{1}{t} \expec \left[\sum_{\tau=1}^{t} \ustat(\tau)\right].
\end{split}
\]
Since the epoch duration $\DT_r$ is independent of the shifted queueing state $\tilde{\b}(T_r)$, based on the definition of $\Mthree(\ks,\DT_r)$, one obtains 
\[
\expec\left [\Mthree(\ks,\DT_r)\vert \tilde \b(T_r)\right]= \expec\left [\Mthree(\ks,\DT_r)\right].
\]
Thus, by combining all these arguments, expression (\ref{eq:opt_renewal_1}) becomes
\begin{equation}
 \begin{split}
& \expec \left[\sum_{\tau=T_r}^{T_{r+1}-1}\Ls( \bs(\tau))+\W\g(\tau)| \bs(T_r), \u(\tau) = \uhat(\tau),\,\forall \tau\right]\nn\\
\le & \W\expec[\DT_r]\lim_{t\to \infty} \frac{1}{t} \sum_{\tau=1}^t\expec \left[ \gstat(\tau)\right]+\expec\left [2\Mone(\ks)\DT_r^2\right] \\
&+\la\tilde \b(T_r)\expec[\DT_r]\left(\lim_{t\to \infty} \frac{1}{t} \expec \left[\sum_{\tau=1}^t  \ustat(\tau)\right]+(1-\la)\ks\right) 
\end{split}
 \end{equation}
Furthermore, recall that epoch duration $\DT_r$ is independent and identically distributed. Therefore, there exists a random variable $\DT$ with the common inter-arrival time distribution, such that 
\[
\expec\left[\DT^2_r\right]= \expec\left[\left(\DT\right)^2\right],\,\,\expec[\DT_r]= \expec\left[\DT\right],\,\, \forall r.
\]
Note that $\lim_{t\to \infty} \frac{1}{t} \sum_{\tau=1}^t\expec \left[ \gstat(\tau)\right]=\Jk{2}^\star\leq \Jk{1}^\star$. By taking expectation in the above expression with respect to $\tilde \b(T_r)$, one obtains
\[
\begin{split}
&\expec\left[\sum_{\tau=T_r}^{T_{r+1}-1}\Ls( \bs(\tau))+\W\g(\tau)| \u(\tau) = \uhat(\tau),\,\forall \tau\right] \\
\leq&\expec\left [\Mthree(\ks,\DT_r)\right] +\Mtwo(\ks)\expec\left [\DT_r\right]  + \W \expec[\DT_r] \Jk{1}^\star.
\end{split}
\]
By a telescoping sum over $r=0,\ldots, \R(t)-1$, and notice that 
\[
T_0=1,\,\, T_{r+1}=T_{r}+\DT_r,\,\,\forall r\geq 0,
\]
the above expression implies 
\begin{equation}
\begin{split}
&\expec\left[\sum_{\tau=1}^{T_{\R(t)}}\Ls( \bs(\tau))+\W\g(\tau)| \u(\tau) = \uhat(\tau),\,\forall \tau\right] \leq \\
&    \R(t)\left(\expec\left [\Mthree(\ks,\DT)\right] +(\Mtwo(\ks) +\W \Jk{1}^\star) \expec\left [\DT\right] \right).
\end{split}
 \end{equation}
 Dividing both sides the above expression by $\R(t) \expec \left[\DT\right]$, and recalling the definition of $\Mfour(\ks)$, the above expression becomes
 \begin{equation}
 \begin{split}\label{eq:renewal_2}
& \frac{\W }{\R(t) \expec[\DT]}\expec\left[ \sum_{\tau=1}^{T_{\R(t)}}\g(\tau)|  \uhat(\tau),\,\,\forall\tau\right]+\\
&\frac{1}{\R(t) \expec[\DT]}\expec\left[L(\tilde \b(T_{\R(t)})) - L(\tilde \b(0))\right]\\
\leq &  \Mfour(\ks)+ \W \Jk{1}^\star.\\
\end{split}
 \end{equation}
From the assumption of Lyapunov function, we know that for any $\R(t)\in\mathbb N$, $L(\tilde \b(T_{\R(t)})) \geq 0$ and $0\leq L(\tilde \b(0))<\infty$. Also, recall $t_r\rightarrow \infty$ as $r\rightarrow\infty$ and $\R(t)=\max\{r:T_r\leq t\}$. Then, as $t\rightarrow\infty$, we get $\R(t)\rightarrow\infty$. This implies that 
\[
\lim_{t\rightarrow\infty}\!\frac{\expec\left[L(\tilde \b(T_{\R(t)})) - L(\tilde \b(0)) \right]}{\R(t) \expec[\DT]}\!=\!\!\lim_{t\rightarrow\infty}\!\frac{\expec\left[L(\tilde \b(T_{\R(t)})) \right]}{\R(t) \expec[\DT]}
\]
which is a positive quantity. By taking the limit on $t\rightarrow \infty$, and dividing both sides by $\W\in(0,\Wmax]$, expression \eqref{eq:renewal_2} implies
 \begin{equation}
 \begin{split}
&\lim_{t\rightarrow \infty} \frac{
\expec\left[ \sum_{\tau=1}^{t}\g(\tau)-\sum_{\tau=T_{\R(t)}+1}^{t}\g(\tau)| \uhat(\tau),\,\,\forall\tau\right] }{t}\cdot\\
&\frac{t}{\R(t) \expec[\DT]}\leq  \frac{\Mfour(\ks)}{\W}+  \Jk{1}^\star.
\end{split}
 \end{equation}
Now, since $T_{\R(t)}+1\leq t\leq T_{\R(t)+1}$, and $T_{\R(t)+1}-T_{\R(t)}=\DT_{\R(t)}$, by letting 
\[
\gmax = \max\left\{|\g(t)|\middle|
\begin{array}{l}
 \u(t)\in [\umin,\umax],\,\, \\
\p(t, \ell)\in [\pmin(\ell), \pmax(\ell)],\, \forall \ell,\\
\d(t)\in [\dmin, \dmax],\\
\fn(t)\in\F
\end{array}
\right\}
\]
and noting that $\expec\left[\DT\right],\expec\left[(\DT)^2\right]<\infty$ and $|\g(t)|\leq \gmax$ for each time slot $t$, one easily obtains
\[
\begin{split}
0\!\leq\!&\lim_{t\rightarrow \infty}\!\frac{1}{\R(t) \expec[\DT]}\left|\expec\!\left[ \sum_{\tau=T_{\R(t)}+1}^{t}\!\g(\tau)| \uhat(\tau),\,\forall\tau\!\right]\right|\\
\leq& \lim_{t\rightarrow \infty}\frac{\expec[\DT_{\R(t)}] \gmax}{\R(t) \expec[\DT]}=\lim_{t\rightarrow \infty} \frac{\gmax\expec[\DT]}{\R(t)\expec[\DT] }=0.
\end{split}
\]
The last equality is due to the fact that $\R(t)\rightarrow\infty$ when $t\rightarrow\infty$. Next, recall from the Elementary Renewal Theory (Theorem 3.3.4, \cite{ross_stochastic}) that
\[
\lim_{t\rightarrow\infty} \frac{t}{\R(t)\expec[\DT]}=1\,\, \text{almost surely}.
\]
By combining all previous arguments, one further obtains the following expression:
\begin{equation*}
\lim_{t\rightarrow \infty} \frac{1}{t}\expec\left[ \sum_{\tau=1}^{t-1}\g(\tau)| \u(\tau) = \uhat(\tau),\,\,\forall\tau\right] 
\leq  \frac{\Mfour(\ks)}{W}+   \Jk{1}^\star
 \end{equation*}
almost surely. Finally, from the definition of $\Jk{1}(\u^\star(\Pk{3}))$, the above inequality implies expression \eqref{eq:perf_bdd_renewal}.
 \end{proof}

\bibliography{jqin}
\bibliographystyle{unsrt}

\end{document}